\documentclass[12pt]{amsart}
\usepackage{amscd}

\newtheorem{theorem}{Theorem}
\newtheorem{lemma}[theorem]{Lemma}
\newtheorem{corollary}{Corollary}
\newtheorem{proposition}[theorem]{Proposition}

\theoremstyle{definition}

\newtheorem{notation}{Notation}

\theoremstyle{remark}
\newtheorem*{remark}{Remark}

\usepackage{amsfonts}
\newcommand{\field}[1]{\mathbb{#1}}
\newcommand{\Q}{\field{Q}}
\newcommand{\R}{\field{R}}
\newcommand{\Z}{\field{Z}}

\newcommand{\A}{\field{A}}

\renewcommand{\a}{\alpha}
\renewcommand{\b}{\beta}

\newcommand{\D}{\Delta}

\newcommand{\G}{\Gamma}
\renewcommand{\k}{\kappa}
\renewcommand{\l}{\lambda}

\newcommand{\fg}{\mathfrak g}
\newcommand{\fp}{\mathfrak p}

\newcommand{\fu}{\mathfrak u}
\newcommand{\fv}{\mathfrak v}
\newcommand{\fl}{\mathfrak l}

\newcommand{\ra}{\rightarrow}

\begin{document}

\title[Centrality]{Centrality}

\author{T. N. Venkataramana}

\address{ School of Mathematics, Tata Institute of Fundamental 
Research, Homi Bhabha Road, Bombay - 400 005, INDIA.}

\email{venky@math.tifr.res.in}

\subjclass{Primary 20H05 Secondary 22E40\\
T. N. Venkataramana,
School of Mathematics, Tata Institute of Fundamental 
Research, Homi Bhabha Road, Bombay - 400 005, INDIA.
venky@math.tifr.res.in}

\date{}

\begin{abstract}

We give  a simple proof of  the centrality of the  congruence subgroup
kernel in the higher rank isotropic case.

\end{abstract}

\maketitle

\section{Introduction}

In this paper, we give a simple  proof of the well known centrality of
the congruence subgroup  kernel in the ``higher  rank'' isotropic case
(\cite{R1}, \cite{R2}). That  is, we prove the following  (we refer to
Section 1 for definitions of the terms involved). 

\begin{theorem} \label{maintheorem} The congruence subgroup kernel $C$
associated  to   a  connected  $\Q$-simple  simply   connected  linear
algebraic group  $G$ defined  over $\Q$ with  $\Q-rank (G)\geq  1$ and
$\R-rank (G)\geq  2$, is  {\bf central}  in the  arithmetic completion
$\widehat G$ of $G(\Q)$.
\end{theorem}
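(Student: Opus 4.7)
The plan is to show that $C$ centralizes the closure of each arithmetic unipotent subgroup $U(\Z)$ in $\widehat{G}$, and then conclude by invoking the higher-rank generation of $\Gamma$ by such unipotent subgroups.

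First, I would fix the setup. Since $\Q\text{-rank}(G) \ge 1$, choose a minimal proper $\Q$-parabolic $P = MU$ of $G$ with unipotent radical $U$ and opposite parabolic $P^- = MU^-$, and let $S \subset M$ be a maximal $\Q$-split torus. Write $\Gamma = G(\Z)$. A key preliminary (the \emph{unipotent congruence subgroup property}) is that $U(\Z)$ is a finitely generated torsion-free nilpotent group, and every finite-index subgroup of $U(\Z)$ contains a principal congruence subgroup. Hence the arithmetic and congruence topologies on $U(\Q)$ coincide, the closure $\overline{U(\Z)}$ of $U(\Z)$ in $\widehat G$ maps isomorphically onto its image in $\bar G$, and
\[
C \cap \overline{U(\Z)} = \{1\},
\]
and similarly with $U^-$ in place of $U$.

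Next, the main step is to show $[C, U(\Z)] = \{1\}$. Choose $t \in S(\Z) \cap \Gamma$ acting on $U$ with integer eigenvalues all strictly greater than $1$ in absolute value (possible since the roots appearing in $\mathrm{Lie}(U)$ can all be made positive on a suitable cocharacter). For $u \in U(\Z)$, the conjugates $t^n u t^{-n}$ lie in increasingly deep principal congruence subgroups of $U(\Z)$, so $t^n u t^{-n} \to 1$ in $\widehat G$. For $c \in C$, normality yields $[c,u] \in C$, and the commutator identity
\[
t^n [c,u] t^{-n} \;=\; \bigl[\, t^n c\, t^{-n},\; t^n u\, t^{-n}\,\bigr]
\]
combined with continuity of the commutator and compactness of $C$ forces $t^n [c,u] t^{-n} \to 1$ in $\widehat G$. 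A careful analysis of how $\mathrm{Ad}(t)$ moves elements of the compact group $C$ relative to $\overline{U(\Z)}$ then forces $[c,u]$ itself to lie in $\overline{U(\Z)}$, whence $[c,u]=1$ by the unipotent CSP above. A symmetric argument with the opposite parabolic shows $[C, U^-(\Z)] = \{1\}$.

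Finally, the subgroup of $\Gamma$ generated by $U(\Z)$ and $U^-(\Z)$ has finite index in $\Gamma$; this is the point at which the hypothesis $\R\text{-rank}(G) \ge 2$ enters, via a higher-rank generation result of Bass--Milnor--Serre / Raghunathan type for arithmetic groups. Hence $C$ centralizes an open subgroup of $\widehat G$, and combined with the normality of $C$ under the dense subgroup $G(\Q)$, this implies that $C$ lies in the center of $\widehat G$. The main obstacle is the contraction step in the preceding paragraph: turning the convergence $t^n [c,u] t^{-n} \to 1$ into the conclusion $[c,u] \in \overline{U(\Z)}$ demands a genuine grip on how iterated conjugation by the contracting element $t$ moves elements of the compact profinite kernel $C$ relative to the unipotent direction, and this is precisely where the interplay between the arithmetic and congruence topologies becomes most delicate.
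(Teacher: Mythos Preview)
Your contraction strategy has a fatal gap at the very first move: there is no element $t \in S(\Z)\cap\Gamma$ that contracts $U$. The torus $S$ is $\Q$-\emph{split}, so $S(\Z)\simeq\{\pm 1\}^{r}$ is finite and every element acts on $\mathrm{Lie}(U)$ with eigenvalues $\pm 1$. More generally, any $t\in\Gamma$ normalising $U$ lies in $P(\Z)=L(\Z)U(\Z)$, and no element of $L(\Z)$ has all eigenvalues on $\mathrm{Lie}(U)$ of absolute value $>1$ (the anisotropic part of $L$ contributes eigenvalues that are algebraic units, not contracting). If instead you take $t\in S(\Q)$ genuinely contracting at one archimedean place, then $t^{n}ut^{-n}$ does \emph{not} tend to $1$ in $\widehat G$ or even in $G(\A_f)$: at the primes dividing the denominator of $t$ the conjugates expand rather than contract, so they do not enter deeper and deeper principal congruence subgroups. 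Thus the premise ``$t^{n}ut^{-n}\to 1$ in $\widehat G$'' is simply false in the arithmetic/adelic setting, and the whole commutator--limit argument collapses before you ever reach the step you flagged as delicate. (A local version of this contraction idea does work at a \emph{single} prime $p$, with $t\in S(\Q_p)$; the paper uses exactly that in the ``only if'' direction of its Proposition on commuting $G_p$'s. But it cannot be made global.)

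The paper's route is quite different and avoids contraction entirely. It fixes a maximal parabolic $P=LU$ and works with the Levi. First it shows (easily) that the image $C'$ of the congruence kernel of $L$ in $C$ is central. Then, writing $F(m)$ for the normal subgroup of $G(\Z)$ generated by $P^{\pm}(m\Z)$, it analyses the action of $M(\Z)$ (the identity component of the Zariski closure of $L(\Z)$) on $\Gamma(m')/F(m)$ via the Bruhat-type decomposition $x=u^{-}p$ on a Zariski-open set: for each $x$ one finds a congruence subgroup $M(a(x)^{e}\Z)$, with $a(x)$ coprime to $m$, fixing the coset $xF(m)$. When $M(\Z)$ is infinite and not virtually abelian (in particular when $\Q\text{-rank}\ge 2$), combining this with $M(m\Z)\subset F(m)$ and strong approximation produces a single \emph{fixed} congruence subgroup $M(D\Z)$ acting trivially on $C/C'$; simplicity of $G(\Q)$ then forces all of $G(\Q)$ to act trivially, and a short homomorphism argument upgrades centrality of $C/C'$ to centrality of $C$. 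The genuinely new case is $\Q\text{-rank}=1$ with $L(\Z)$ virtually abelian: there the ``fixed congruence subgroup'' argument fails, and the paper instead proves a uniform bound (Theorem~\ref{dirichletlinear}, via Dirichlet's theorem on primes in arithmetic progressions) on the exponent of $M(\Z)$ modulo the group generated by $\{M((a+bx)^N\Z):x\in\Z\}$, applies it through a Jacobson--Morozov $SL_2\hookrightarrow G$, and reduces to a criterion that $U^{+}(\Q_p)$ and $U^{-}(\Q_q)$ commute in $\widehat G$ for $p\neq q$. None of this is visible from your outline; the missing idea is precisely how to manufacture a \emph{fixed} infinite subgroup of the Levi acting trivially on (a quotient of) $C$, in lieu of the nonexistent global contracting element.
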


The newest part of the proof is  in the case when $\Q -rank (G)=1$ and
the group of integer points $L(\Z)$ of the Levi of a minimal parabolic
$\Q$-subgroup $P$  is a  virtually abelian  infinite group.  To handle
this case, we make use of the following result, which is quite general
and may be of independent interest. The proof uses Dirichlet's theorem
on primes in arithmetic progressions.

\begin{theorem} \label{dirichletlinear} Let $\D \subset GL_n(\Z)$ be a
subgroup. There  exists an integer  $g(n)$ dependent only on  $n$ such
that for any two co-prime integers  $a,b$ and any fixed integer $N$ the
group  $\D  _{a,b}$  generated  by the  congruence  subgroups  $\{  \D
((a+bx)^N) :=  \D \cap GL_n((a+bx)^N\Z):  x\in \Z\}$, (is  normal and)
the exponent of the quotient group $\D/\D _{a,b}$ is bounded by $g(n)$
(i.e. depends only on $n$ and not on the integers $a,b,~and ~N$).
\end{theorem}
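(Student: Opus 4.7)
The plan has three steps: establish normality, reduce the exponent bound to a $\gcd$-of-orders problem via Dirichlet's theorem, and bound that $\gcd$ using the eigenvalues of $\gamma$ together with Minkowski's theorem on torsion in $GL_n(\Z)$.

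Normality is immediate: each $\Delta((a+bx)^N)$ is normal in $\Delta$ because conjugation in $GL_n(\Z)$ preserves any congruence modulo $(a+bx)^N$, so the subgroup they generate is normal. Fix $\gamma \in \Delta$; by Dirichlet's theorem (using $\gcd(a,b)=1$) there are infinitely many primes $p = a+bx$. Letting $k_p$ denote the order of $\gamma$ in $\Delta/\Delta(p^N) \hookrightarrow GL_n(\Z/p^N\Z)$, one has $\gamma^{k_p} \in \Delta(p^N) \subset \Delta_{a,b}$, so by B\'ezout $\gamma^d \in \Delta_{a,b}$ with $d := \gcd_p k_p$. It suffices to bound $d$ by a constant depending only on $n$.

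The $p$-part of $k_p$ vanishes in the $\gcd$ as soon as a different prime is used, so I work prime by prime on $v_\ell(d)$. For unramified generic $p$, one has $v_\ell(k_p) = \max_i v_\ell(\operatorname{ord}(\bar\lambda_i))$ where the $\lambda_i$ are the eigenvalues of $\gamma$, reduced to $\overline{\F_p}^\times$. A root-of-unity eigenvalue $\mu$ of order $m$ must satisfy $\varphi(m) \leq n$ by Minkowski, so $v_\ell(m) \leq v_\ell(M(n))$ for a Minkowski-type bound $M(n)$. A non-root-of-unity eigenvalue $\lambda$ contributes $v_\ell(\operatorname{ord}(\bar\lambda)) = 0$ precisely when $\bar\lambda$ is an $\ell$-th power in the residue field, a splitting condition in the abelian Kummer extension $\Q(\zeta_\ell,\lambda^{1/\ell})$; Dirichlet applied in the compositum with $\Q(\zeta_b)$ produces primes $p = a+bx$ realizing this condition.

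The hard part will be executing the last step when $\ell \mid b$: the arithmetic progression then constrains $p$ to a single residue class modulo $\ell$, and one must check that the combined Dirichlet-plus-splitting condition has non-empty realization, i.e.\ that the Artin symbol of the Kummer extension is compatible with the progression. In the complementary case when all eigenvalues of $\gamma$ are roots of unity, $\gamma^m$ is unipotent for some $m \leq M(n)$, and the same B\'ezout argument applied to $\gamma^m$ shows $\gamma^m \in \Delta_{a,b}$, since the orders of a unipotent element modulo distinct primes are pairwise-coprime prime powers whose $\gcd$ is $1$. Combining both regimes gives $d \mid M(n)$, so one may take $g(n) = M(n)$.
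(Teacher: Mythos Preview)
Your approach has a genuine gap in the treatment of non-root-of-unity eigenvalues, and it diverges from the paper's method in a way that matters.

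First, the claimed equivalence ``$v_\ell(\operatorname{ord}(\bar\lambda)) = 0$ precisely when $\bar\lambda$ is an $\ell$-th power in the residue field'' is false. Being an $\ell$-th power only says that $\operatorname{ord}(\bar\lambda)$ divides $(q-1)/\ell$; if $\ell^2 \mid q-1$ this still allows $\ell \mid \operatorname{ord}(\bar\lambda)$. What you actually need is that $\bar\lambda$ is an $\ell^{v_\ell(q-1)}$-th power, and $v_\ell(q-1)$ is not under your control: if $\ell^e \,\|\, b$ and $a \equiv 1 \pmod{\ell^e}$, then every prime $p$ in the progression has $v_\ell(p-1) \geq e$, which can be arbitrarily large. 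Second, even the weaker $\ell$-th-power condition is a splitting condition in the Kummer extension $\Q(\zeta_\ell,\lambda^{1/\ell})$, which is not cyclotomic over $\Q$; combining it with the progression $a \bmod b$ requires Chebotarev (equivalently Artin reciprocity), not Dirichlet. You flag the case $\ell \mid b$ as ``the hard part'' and then do not carry it out; this is exactly where the single-prime strategy breaks down, and it is precisely the use of Artin reciprocity that the paper is designed to avoid.

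The paper sidesteps all of this by abandoning single primes. For a fixed $\ell$ it picks $m \geq 2n$ with $m \geq v_\ell(b)$, chooses a prime $p$ that is a primitive root modulo $\ell^m$, and then uses Dirichlet (and CRT) to find large primes $p_1,\dots,p_h$ each congruent to $p$ modulo $\ell^m$, with residues modulo $c = b/\ell^{v_\ell(b)}$ arranged so that the \emph{product} $p_1\cdots p_h$ lies in the progression $a + b\Z$. Since each $p_i$ is a primitive root mod $\ell^m$, an elementary count shows the $\ell$-exponent of $GL_n(\F_{p_i})$ is at most $2n$, uniformly. Thus the $\ell$-exponent of $GL_n(\Z/(p_1\cdots p_h)^N\Z)$ is bounded by $(n+1)^{2n}$ for every $\ell$ (and is $1$ once $\ell > n+1$), and one takes the product over $\ell \leq n+1$. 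The whole argument is group-order bookkeeping for $GL_n(\F_p)$ and never looks at the eigenvalues of a particular $\gamma$; the freedom to use a composite element of the progression is what makes pure Dirichlet suffice. Your normality and B\'ezout reductions are fine, and the unipotent endgame is correct, but the middle step needs to be replaced by this product-of-primes construction.
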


Theorem  \ref{dirichletlinear} is  proved  in section  4. Using  this,
Theorem \ref{maintheorem} is proved in section 5 in the case $\Q -rank
(G)=1$ and  the group $L(\Z)$  is infinite and virtually  abelian. The
case when  $L(\Z)$ is not  virtually abelian  is proved in  section 6.
The other sections establish some preliminaries.

\begin{remark}    In    the    generality   stated    here,    Theorem
\ref{maintheorem} is due to Raghunathan; his proof was quite different
in the two  cases (1) $\Q-rank (G)\geq 2$ \cite{R1},  and (2) $\Q-rank
(G)=1,  \quad   \R-rank  (G)\geq  2$  \cite{R2}.    For  earlier  work
(especially in the  case $SL_n$ and $Sp_{2n}$ \cite{BMS})  we refer to
the  bibliography  in  \cite{R1}.  [At the  time  that  \cite{R1}  and
\cite{R2}  were   written,  the  Kneser-Tits  problem  (we  refer  to
\cite{Gille} for the  statement over number fields  and for references
to previous  work) had not  been completely resolved  and consequently
the  formulation  of  Raghunathan  was slightly  different].   In  the
present paper  we give a different  proof especially in the  case when
$\Q-rank (G)=1$ ( and the group of integer points of the Levi subgroup
of a minimal parabolic $\Q$-subgroup is virtually abelian).
\end{remark} 

\begin{remark} To prove  the centrality in the $\Q $  -rank one case (
with integral points of the Levi being virtually abelian), Raghunathan
uses   the    centrality   (proved    by   Serre    \cite{Ser})   when
$G=R_{K/\Q}(SL_2)$ is the Weil restriction of scalars from $K$ to $\Q$
of $SL_2$, where $K$ a number  field having infinitely many units. The
proof  of centrality  in \cite{Ser}  makes  crucial use  of the  Artin
reciprocity  law. In  contrast,  we use  only  Dirichlet's theorem  on
infinitude of primes in arithmetic progressions. Thus the proof is new
even  in  the case  considered  by  Serre  (we  note that  Prasad  and
Rapinchuk \cite{Pr-Ra2} also proved  centrality; the present paper has
considerable  overlap with  \cite{Pr-Ra2}  and \cite{Pr-Ra2}  contains
much  more than  what is  proved in  the present  paper. However,  the
method  of  proof  in  our  paper is  quite  different  and  it  seems
worthwhile to record the proof here).
\end{remark}

\begin{remark}  Raghunathan  first  proved centrality  for  the  group
$SU(2,1)$: suppose  $L/K$ is  a quadratic  extension of  number fields
with  $K\neq \Q$,  $h$ is  an  isotropic non-degenerate  form in  three
variables defined on $L^3$ and hermitian with respect to $L/K$, $G$ is
the unit group  of $h$ i.e.  $G=R_{K/\Q}(SU(2,1)$  where $R_{K/\Q}$ is
the Weil restriction of scalars.  The  proof of this also makes use of
Artin Reciprocity.  In the present  paper, we completely avoid the use
of $SU(2,1)$.   This is  especially important  since, in  \cite{R2}, a
deep theorem on the embedding of suitable $SL_2$ and $SU(2,1)$ is used
(which in turn is based on  the classification of rank one groups over
number fields) to obtain the centrality in the general case from these
two cases.  In contrast, we (by appealing to a theorem of Jacobson and
Morozov) use only a suitably  embedded $SL_2$ (and Dirichlet's theorem
on  primes  in  arithmetic  progressions)  to  get  the  general  case
directly.  In particular, we get another proof for $SU(2,1)$ as well.
\end{remark}

\begin{remark}  Most importantly,  Raghunathan  \cite{R1} proved  that
once the  congruence subgroup kernel  $C$ is  central for any  $G$ (no
assumptions on  $\Q$-rank), then $C$  is {\it finite}; a  much simpler
proof  of  the   finiteness  was  later  given  by   Gopal  Prasad  in
\cite{P}. Moreover, once $C$ is central,  $C$ can be computed: it is a
precisely determined  subgroup of  the group  of roots  of unity  in a
number field  $K$ ($K$  is such that  $G=R_{K/\Q}(\mathcal G)$  is the
Weil  restriction of  scalars of  an absolutely  almost simple  simply
connected group $\mathcal G$ defined over $K$).  Important progress on
the  computation  of  (the  central) $C$  was  made  in  \cite{Pr-R1},
\cite{Pr-R2}, and  its complete determination  was done by  Prasad and
Rapinchuk in \cite{Pr-Ra}. \\

Our paper  does not deal with  this question at all,  and is concerned
only  with the  centrality of  the congruence  subgroup kernel  in the
cases considered in the theorem.
\end{remark}

\begin{remark}  The proof  here works  only for  arithmetic groups  in
characteristic zero but the proof of centrality in \cite{R1},\cite{R2}
works for  all global fields  and for $S$-arithmetic groups;  it seems
possible to adapt  the present proof to the S-arithmetic  case, but it
appears to be rather long.
\end{remark}

{\bf Acknowledgments:}  I learnt  most of  the mathematics  around the
congruence subgroup  property at TIFR, from  Raghunathan, Prasad, Nori
and my  fellow student  Sury. I  am especially  grateful to  Gopal for
suggesting many  topics to work  on, patiently answering  my questions
and going  through my  early papers  (and righting  so many  wrongs in
those papers).  It is a pleasure to  dedicate this paper to him on his
75th birthday. \\

This paper  owes a great deal  to the methods of  \cite{R1}, \cite{R2}
and  I  am  grateful  to  Raghunathan  for  explaining  his  proof  in
detail. \\

I am grateful to the organisers  for their invitation to contribute to
this volume honouring  Gopal. I am especially grateful  to the referee
for a very careful reading of the manuscript and for pointing out some
egregious   errors,  and   for  his   suggestions  on   improving  the
manuscript. \\

A sizeable part of the work was  done during the author's visit to Max
Planck  Institut, Bonn,  Germany during  2019-2020. I  thank MPIM  for
hospitality and  support.  The support  of JC Bose fellowship  for the
period 2020-2025 is gratefully acknowledged.

\newpage 

\section{Generalities}

\subsection{The Congruence Subgroup Kernel}

The  following definitions  and  observations are  well  known and  we
recall them without reference.\\

Let  $G$ be  a  linear  algebraic group  defined  over  $\Q$.  Fix  an
embedding $G\subset  SL_n$ defined  over $\Q$ and  call a  subgroup of
finite index in $G(\Z)=G\cap SL_n(\Z)$ an {\it arithmetic subgroup} of
$G(\Q)$.  We assume  that $G(\Z)$ is Zariski dense in  $G$.  Denote by
$SL_n(m\Z)$ the kernel to  the natural map $SL_n(\Z)\ra SL_n(\Z/m\Z)$;
a  subgroup of  $G(\Z)$ which  contains $G(m\Z):=G\cap  SL_n(m\Z)$ for
some non-zero  integer $m$  is called a  {\it congruence  subgroup} of
$G(\Q)$.  The notion of  arithmetic subgroups and congruence subgroups
of  $G(\Q)$ does  not depend  on  the specific  $\Q$-embedding $G  \ra
SL_n$.  \\

There  is  a  topological  group  structure  on  $G(\Q)$  obtained  by
designating  a fundamental  system  of neighbourhoods  of identity  in
$G(\Q)$ to be arithmetic groups (respectively congruence subgroups) in
$G(\Z)$; this is called the  {\it arithmetic} (resp. {\it congruence})
topology  on $G(\Q)$);  the  group $G(\Q)$  admits  a completion  with
respect to this  topology; this is the  {\bf arithmetic} (respectively
{\bf congruence})  completion of $G(\Q)$, denoted  $\widehat{G}$ (resp
$\overline  {G}$).  These  completions  are  locally compact,  totally
disconnected and Hausdorff. Since the arithmetic topology on $G(\Q)$ is
finer   than  the   congruence  topology,   we  have   a  homomorphism
$\widehat{G} \ra \overline {G}$ of  completions of $G(\Q)$ with kernel
$C$, say. We have an exact sequence of topological groups
\[ 1 \ra C \ra \widehat{G} \ra \overline{G} \ra 1,\] and $C$ is called
the {\bf congruence  subgroup kernel}. The exact  sequence splits over
$G(\Q)$. \\

The restriction of the arithmetic  topology on $G(\Q)$ to the subgroup
$\G= G(\Z)$ is just the profinite  topology on $G(\Z)$; the closure of
$G(\Z)$   in  $\widehat   G$  is   simply  the   profinite  completion
$\widehat{\G}$ of $G(\Z)$; the closure of $G(\Z)$ in $\overline{G}$ is
the congruence completion $\overline{\G}$ of $\G$; it is not difficult
to  see that  $C  \subset \widehat{\G}$  and that  we  have the  exact
sequence
\[ 1 \ra  C \ra \widehat{\G}\ra \overline{\G} \ra 1,\]
which shows that $C$ is a compact profinite group. 

If $G$ is  a unipotent algebraic group, then every  subgroup of finite
index  in $G(\Z)$  is a  congruence subgroup  and $C$  is trivial.  In
particular, if $U\subset G$ is a unipotent algebraic $\Q$-subgroup and
$\G  \subset G(\Q)$  is  an  arithmetic group,  then  there exists  an
integer $m\neq 0$ such that $U(m\Z)\subset \G$. \\

Suppose $\G=G(\Z)$ and  $\{\G_m \subset \G\}$ a  ``cofinal'' family of
arithmetic  groups which  are normal  in $\G$  (i.e.  such  that every
arithmetic subgroup of $\G$ contains  some $\G _m$). Since the closure
of  $\G$  in the  congruence  completion  $\overline{G}$ is  open,  it
follows that the closure $\overline{\G  _m}$ in $\overline{G}$ is also
open  for every  $\G _m$;  hence the  intersection $Cl(\G  _m)=\G \cap
\overline{\G _m}$ is a congruence  subgroup of $\G$ containing $\G_m$:
it is  called the congruence closure  of $\G _m$. It  follows from the
definitions that $C$ is the inverse limit of $Cl(\G _m)/\G_ m$:
\begin{equation} \label{Casinverselimit} 
C=\varprojlim Cl(\G _m)/\G _m.
\end{equation}

We will say that  relative to a finite set $S$ of  primes, and a fixed
integer $e$, an  integer $m$ is sufficiently deep if  $m$ is divisible
by all the powers $p^e$ for all the primes $p$ in $S$.  In the inverse
limit for $C$,  we may only choose $m$ sufficiently  deep relative to
any  finite  subset  $S$  of  primes. We  will  use  this  observation
repeatedly in the sequel.\\

If $\theta: H  \ra G$ is a morphism of  algebraic $\Q$-groups, then it
induces   maps  $\widehat{H}\ra   \widehat{G}$  and   $\overline{H}\ra
\overline{G}$ and  also a  homomorphism $\theta: C_H  \ra C_G$  of the
corresponding congruence subgroup kernels. \\

If  $G(\Z)$ is  Zariski dense  in  $G$ as  before, and  $G$ is  simply
connected, then  {\it strong  approximation} holds and  the congruence
completion $\overline{G}$ of $G$ is just the group $G(\A_f)$ of points
of $G$ with coefficients in the ring $\A _f$ of finite adeles.

\begin{notation} If  $a,b \in  \G$ are elements  in an  abstract group
$\G$,  we write  $^a(b)=aba^{-1},[a,b]=aba^{-1}b^{-1}$.  If  $m\in \G$
and  $A\subset \G$  is  a  subset, we  denote  by  $^m(A)$ the  subset
$mAm^{-1}$.  If   $A,B$  are  subgroups,  then   $[A,B]$  denotes  the
``commutator     subgroup''    generated     by    the     commutators
$[a,b]=aba^{-1}b^{-1}$  for  $a\in  A,   b\in  B$.   Abusing  notation
slightly, we  sometimes write, for  subsets $X,Y \subset  \G$, $[X,Y]$
for the {\it set} of commutators $[x,y]$ with $x \in X, y\in Y$. If $M
\subset \G$ is a subset and $A\subset  \G$ is a subgroup, we denote by
$^M(A)$ the {\it subgroup} generated by the conjugates $^m(A)$.

\end{notation}

\subsection{Maps in the  Congruence topology}

Suppose ${\mathbb A}^a,{\mathbb A}^b$  are affine spaces of dimensions
$a,b$   respectively. We  fix   the  standard   bases   of   ${\mathbb
A}^a,{\mathbb  A}^b$; then  the  coordinate  functions $X_i,Y_j$  with
respect to  these bases  generate the  respective coordinate  rings of
${\mathbb A}^a, {\mathbb A}^b$.   The polynomial rings $\Z[X_i]_{1\leq
i \leq  a}$ and $\Z[Y_j]_{1\leq  j \leq b}$ with  integer coefficients
will be referred to as  rings of integral polynomials. Suppose $\theta
: {\mathbb A}^a \ra {\mathbb A}^b$  is a morphism of varieties defined
over  $\Q$. Then  it follows that there exist  finitely many
vectors $w_{\nu} \in \Q ^b$, indexed  by a finite set of elements $\nu
\in  {\mathbb Z}_+^a$  (thus  the $\nu  $  have non-negative  integral
entries) such that
\[  \theta  (X)= \sum_{\nu}  w_{\nu}  X^{\nu}.\]  By taking  a  common
denominator $D$  of the  finite set $w_{\nu}$  of vectors,  it follows
that
\[\theta  (X)= \frac{1}{D}\sum_{\nu }   w'_{\nu}  X^{\nu},\]   for  some
integral vectors $w'_{\nu}$. We get from the preceding that
\begin{equation} \label{integraltaylor}
  \theta (X)=\theta  (0)+ \frac{1}{D} \sum _{\nu \neq  0} w''_{\nu}X^{\nu}=
  \theta (0) + \frac{1}{D} \sum _{i=1}^a X_iP_i(X) ,
\end{equation} for  some {\it integral} vectors  $w''_{\nu}$, and some
${\mathbb A} ^b$-valued  integral  polynomials  $P_i$ on  the  affine  space
${\mathbb A}^a$.

If $a \geq 1$ is an integer, we can write, from (\ref{integraltaylor}), that
\begin{equation} \label{denominatortaylor}  \theta \big{(}\frac{1}{a}X
\big{)}=\theta (0)+ \frac{1}{Da^N} \sum _{\nu \neq 0} v_{\nu}X^{\nu}
\end{equation} for  some {\it integral}  vectors $v_{\nu} \in  \Z ^b$,
and  for  some integer  $N$  depending  on  the  total degree  of  the
polynomial $\theta $.

\begin{lemma} \label{Dlemma} Suppose $\theta :  H \ra G$ is a morphism
of algebraic groups defined over $\Q$. Then there exists an integer $D
\geq  1$ such  that  for all  $m \geq  1$,  $\theta (H(Dm\Z))  \subset
G(m\Z)$.
\end{lemma}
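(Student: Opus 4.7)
The plan is to apply the integral Taylor expansion recorded in equation (\ref{integraltaylor}) to the morphism $\theta$, viewed as a polynomial map in the matrix entries re-centred at the group identity. Fix $\Q$-embeddings $H\subset SL_{n_1}$ and $G\subset SL_{n_2}$; then $\theta$ is given in coordinates by finitely many polynomials with rational coefficients in the entries of $h\in H$.

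First, I would make the change of variables $h=I+Y$, so that the entries of $Y$ are the natural affine-space coordinates vanishing at the identity, and $h\in H(m\Z)$ is exactly the condition that every entry of $Y$ lies in $m\Z$. Since $\theta$ is a homomorphism of algebraic groups, $\theta(I)=I$, so $\theta'(Y):=\theta(I+Y)-I$ has no constant term. Applying (\ref{integraltaylor}) to $\theta'$ produces an integer $D\geq 1$ and integer-coefficient polynomials $P_i$ (with values in the matrix space of $G$) such that
\[
\theta'(Y)=\frac{1}{D}\sum_{i}Y_i\,P_i(Y).
\]
I would take this $D$ as the constant in the statement of the lemma.

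Second, I would verify the inclusion directly. Given $h\in H(Dm\Z)$, every entry $Y_i$ of $Y=h-I$ is an integer divisible by $Dm$. Since $Y$ has integer entries and $P_i$ is integral, each $P_i(Y)$ is a vector of integers; meanwhile $Y_i/D$ is an integer divisible by $m$. Hence each term $Y_iP_i(Y)/D$ has coordinates in $m\Z$, and summing shows $\theta'(Y)\equiv 0\pmod m$, i.e.\ $\theta(h)\in G(m\Z)$, as required.

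There is essentially no serious obstacle: the lemma is a book-keeping consequence of (\ref{integraltaylor}). The only point requiring a moment of care is to Taylor-expand about the group identity $I$ rather than about the origin of the ambient affine space of matrices; once that re-centring is made, the denominator $D$ extracted from the finitely many coefficients of $\theta$ depends only on $\theta$ and works uniformly in $m$.
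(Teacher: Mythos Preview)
Your proof is correct and follows essentially the same approach as the paper. Both arguments extend $\theta$ to a polynomial map on the ambient matrix space, clear denominators with an integer $D$, and then use the Taylor-type expansion (\ref{integraltaylor}) to see that $\theta(I+Dm Y)-\theta(I)\in m M_B(\Z)$; the only cosmetic difference is that you re-centre at the identity before invoking (\ref{integraltaylor}), whereas the paper writes $\theta=\frac{1}{D}N$ with $N$ integral, computes $N(X+DmY)-N(X)=Dm\,P(X,Y)$, and specialises $X=I$ at the end.
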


\begin{proof} Fix  an embedding  $G \subset  SL_B$ defined  over $\Q$;
then  $G$ is  a $\Q$-defined  closed sub-variety  of $M_B$,  the affine
space  of dimension  $B^2$  viewed  as the  vector  space of  $B\times
B$-matrices (with  the matrices $E_{ij}$  -whose entries are  all zero
except the  $ij$-th entry  which is $1$-  forming a  preferred basis).
Similarly  $H \subset  SL_A \subset  M_A$ as  a $\Q$  sub-variety.  The
composite  $\theta: H  \ra  G \subset  M_B$ is  then  a matrix  valued
polynomial function on $H$. By the  definition of the topology on $H$,
$\theta  $ is  then a  polynomial  function (also  denoted $\theta  :X
\mapsto \theta (X)$)  on $M_A$ with values in $M_B$,  and defined over
$\Q$ i.e.  has coefficients in $\Q$ with respect to the basis $X_{ij}$
of matrix entries. Consequently  $\theta (h)=\frac{1}{D} N(h)$ for all
$h\in M_A$ where the ``denominator'' $D$  is an integer so chosen that
it is  divisible by the  denominators of  all the coefficients  of the
polynomial $\theta$, and  the ``numerator'' $N(h)$ is  a polynomial on
$M_A$ with {\it integer} coefficients.  \\

In particular, if $X,Y \in M_A(\Z)$ are matrices with integer entries,
then  (cf. equation  (\ref{integraltaylor})) we  get $\theta  (X+Dm Y)
=\frac{1}{D} N(X+DmY)$ and  $N(X+DmY)-N(X)=DmP(X,Y)$ where $P(X,Y)$ is
a  $M_B$  valued polynomial  function  on  $M_A\times M_A$  with  {\it
integer} coefficients. Hence
\[  \theta   (X+DmY)-\theta  (X)=mP(X,Y)\equiv   0~(mod  ~   m).\]  In
particular, taking $X=Id_A \in M_A(\Z)$ we get $\theta (H(Dm\Z))\subset
Id _B+mM_B(\Z)$ is an integral matrix congruent to identity modulo $m$
and this proves the lemma.  \\

(The same  construction (see equation  (\ref{denominatortaylor})) shows
that if  $N$ is  the total  degree of the  polynomial $\theta  (h)$ as
above, then 
\begin{equation} \label{denominatorbound}  
\theta (\frac{1}{a}M_A(\Z)) \subset \frac{1}{Da^N}M_B(\Z)).
\end{equation} 
\end{proof}

\begin{notation} Let $i: M \ra G$  be an embedding of linear algebraic
groups  defined  over  $\Q$;  the  groups  $M,G$  are  Zariski  closed
$\Q$-defined  sub-varieties of  $M_B$ say,  where $M_B$  is the  affine
space of dimension $B^2$ viewed as  the set of $B\times B$ matrices. A
polynomial on $M_B$ is said to be {\bf integral} if it is a polynomial
in  the  matrix  entries  of $M_B$  with  integral  coefficients.   An
integral polynomial from $M_B \times  M_B$ into $M_B$ may similarly be
defined. \\

Consider the commutator map $M \times  G \ra G$ given by $(t,x)\mapsto
txt^{-1}x^{-1}$. Since $G \subset SL_B$, the  image of $x\in G$ is the
{\it  adjoint matrix},  denoted $A(x)$,  of $x$.  Clearly, the  map $x
\mapsto A(x)$  extends to an  integral polynomial on $M_B$.  Hence the
commutator map  $(t,x)\mapsto txt^{-1}x^{-1}$ extends to  the integral
polynomial   $M_B  \times   M_B  \ra   M_B$  given   by  $(t,x)\mapsto
txA(t)A(x)$. \\

If $r\in  \Q$ is a nonzero  rational number, denote by  $rM_B(\Z)$ the
set  of  matrices  whose  entries  are  $rX_{ij}$  with  $X=(X_{ij}) \in
M_B(\Z)$.
\end{notation}

\begin{lemma}  \label{commutatorlemma}  With the  preceding  notation,
there exists an  integer $e\geq 1$ such that we  have the inclusion of
the commutator
\[ [M(a ^e \Z),  G\cap (1_B+\frac{m}{a}M_B(\Z))] \subset G(m\Z),\] for
all integers  $m\geq 1$ and $a  \geq 1$ (i.e. the  integer $e$ depends
only on the embedding $M\subset G$ and not on $m$ and $a$).
\end{lemma}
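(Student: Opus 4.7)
The plan is to expand the integral polynomial extension $c(t,x) = tx A(t) A(x)$ of the commutator map as a double Taylor series around $(t,x) = (1_B, 1_B)$, exploit the cofactor identity $y A(y) = \det(y) \cdot 1_B$ to peel off both the purely-$t$ and purely-$x$ strata (using $\det = 1$ on $SL_B$), and then match powers of $a$ and $m$.

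Write $t = 1_B + s$, $x = 1_B + u$, and set $f(s,u) := c(1_B + s,\, 1_B + u) - 1_B$; since $c$ is an integral polynomial in the matrix entries, so is $f$. Expanding in multi-indices gives
\[ f(s,u) \;=\; \sum_{\a,\b} c_{\a,\b}\, s^{\a} u^{\b}, \qquad c_{\a,\b}\in M_B(\Z). \]
Using $A(1_B) = 1_B$ together with the identity $y A(y) = \det(y) \cdot 1_B$, one computes $f(s, 0) = (\det(1_B + s) - 1)\cdot 1_B$ and $f(0, u) = (\det(1_B + u) - 1)\cdot 1_B$. As $M \subset SL_B$ and $G \subset SL_B$, for $t \in M$ and $x \in G$ both determinants equal $1$, so every monomial with $\a = 0$ or $\b = 0$ contributes $0$. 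At the evaluation points of interest we therefore have
\[ f(s,u) \;=\; \sum_{|\a|\geq 1,\,|\b|\geq 1} c_{\a,\b}\, s^{\a} u^{\b}. \]

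Next I would substitute $s = a^e Y$ and $u = \tfrac{m}{a} Z$ with $Y, Z \in M_B(\Z)$, as forced by $t \in M(a^e \Z)$ and $x \in G \cap (1_B + \tfrac{m}{a} M_B(\Z))$. Each nonzero summand becomes an integer matrix multiplied by $a^{e|\a| - |\b|}\, m^{|\b|}$. Let $N$ denote the total degree of $c(t,x)$ in the entries of $x$; since the factor $x$ has degree $1$ and $A(x)$ has degree $B-1$, one has $N \leq B$, depending only on the embedding $M \subset G \subset SL_B$. Taking $e := N$ then gives $e|\a| - |\b| \geq e - N \geq 0$ whenever $|\a|\geq 1$, so each term equals $m \cdot m^{|\b|-1} \cdot a^{\geq 0}$ times an integer matrix, i.e.\ $m$ times an integer matrix. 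Summing, $[t,x] - 1_B = f(s,u) \in m\,M_B(\Z)$, i.e.\ $[t,x] \in G(m\Z)$, as desired.

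The substantive step is the bi-vanishing in paragraph two. Without it, monomials with $\b = 0$ would contribute integer multiples of $a^{e|\a|}$ not divisible by $m$, and monomials with $\a = 0$ would contribute $c_{0,\b}(m/a)^{|\b|} Z^{\b}$ whose denominators $a^{|\b|}$ are not cancelled. The cofactor identity combined with $M, G \subset SL_B$ is exactly what forces every surviving monomial to carry both $s$ and $u$, and this is what lets a single fixed exponent $e$ (depending only on the embedding) absorb the $1/a$ from $u$ against the $a^e$ from $s$, uniformly in $a$ and $m$.
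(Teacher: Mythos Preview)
Your proof is correct. Both your argument and the paper's rest on the same setup---the integral polynomial extension $c(t,x)=txA(t)A(x)$, the substitutions $t=1+s$, $x=1+u$, and the cofactor identity $yA(y)=\det(y)\,1_B$ on $SL_B$---but the executions differ. The paper expands $txt^{-1}x^{-1}$ by hand: writing $A(x)=1+X'$, it uses $xA(x)=1$ once to collapse $1+X+X'+XX'$ to $1$, then rewrites $tXt^{-1}-X=[T,X]t^{-1}$ to pull a factor $T\in a^eM_B(\Z)$ out of each surviving term, arriving at $e=B^2+1$ via a generous degree bound on $X'$. You instead invoke the cofactor identity symmetrically, noting that $c(t,1_B)=\det(t)\,1_B$ and $c(1_B,x)=\det(x)\,1_B$ force the pure-$s$ and pure-$u$ strata of the double Taylor expansion to vanish on $M\times G$, so every surviving monomial automatically carries at least one factor each of $s$ and $u$; a single degree count then gives $e=B$. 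Your route is a little more conceptual and yields the sharper exponent; the paper's is more explicit about precisely which terms remain.
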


\begin{proof} If $x \in  G\cap (1_B+\frac{m}{a}M_B(\Z))$, then $x=1+X$
with $X  \in \frac{m}{a}M_B(\Z)$. The  formula for the  adjoint matrix
$A(x)$ of $x$ shows that if  we write $A(x)=1+X'$, then the entries of
the matrix $X'$  are {\it integral} polynomials in the  entries of the
matrix   $X$    of   degree   at   most    $B^2$.   Consequently,   we
have   \begin{equation}   \label{adjoint}   A(x)=1+X',   \quad   X'\in
\frac{m}{a^{B^2}}M_B(\Z).\end{equation}

If $t\in M(a^e\Z)$ and $x \in G\cap (1+\frac{m}{a}M_B(\Z))$, we see that 
\[ txt^{-1}x^{-1}=t(1+X)t^{-1}(1+X')= (1+tXt^{-1})(1+X')=\]
\[= 1+X+X'+XX'+ (tXt^{-1}-X)+(tXt^{-1}-X)X'. \]
Since $xA(x)=1$ for $x \in G$, we get $1+X+X'+XX'=1$. Hence we get  from the preceding equation, 
\[txt^{-1}x^{-1}=1+[t,X]t^{-1}+[t,X]t^{-1}X',\] 
where $[t,X]=tX-Xt$ is the matrix commutator. Write $t=1+T$. Since $t\in M(a^e\Z)$, it follows that $T\in a^eM_B(\Z)$. We get 
\begin{equation} \label{commutatorequation}
txt^{-1}x^{-1}=1+[T,X]t^{-1}+[T,X]t^{-1}X'. \end{equation}
Since $ T\in a^eM_B(\Z)$, $X \in \frac{m}{a}M_B(\Z)$  and
from equation (\ref{adjoint}),  $X' \in \frac{m}{a^{B^2}}M_B(\Z)$, it follows
that 
\[txt^{-1}x^{-1}\in 1+  ma^{e-B^2-1}M_B(\Z).\] Therefore,  if $e>B^2$,
then  $txt^{-1}x^{-1} \in  G(m\Z)$  proving the  lemma  (thus, in  the
lemma, we may take $e=1+B^2$).

\end{proof}

\section{Isotropic Groups}

\begin{notation}  $G$ is  a  $\Q$ simple  connected simply  connected,
semi-simple algebraic group defined over  $\Q$ with Lie algebra $\fg$.
Assume that $G$ is {\it isotropic}  i.e.  $\Q-rank (G)\geq 1$ and that
$P_0$ is a  minimal parabolic subgroup of $G$ defined  over $\Q$, with
$U_0=U_0^+$ the unipotent radical of  $P_0$.  Fix a Levi decomposition
(over  $\Q$) $P_0=L_0U_0$  of  $P_0$. Write  $\fp _0,{\mathfrak  u}_0,
{\mathfrak   l}_0$    for   the   Lie   algebras    of   $P_0,U_0,L_0$
respectively. The  {\it opposite} group  $U_0^{-}$ may be  defined (as
the unipotent subgroup whose Lie  algebra ${\mathfrak u}_0^{-}$ is the
subspace of $\mathfrak g$ , which,  as a module over the Levi subgroup
$L_0$, is  the dual of  $\mathfrak u _0$).   It is known  \cite {Tits}
that  the group  $G(\Q)^+$  generated by  $U_0^{\pm  }(\Q)$ is  simple
modulo the  centre of  $G(\Q)^+$.  The  resolution of  the Kneser-Tits
conjecture  (see \cite{Gille}  for the  most general  case) says  that
$G(\Q)=  G(\Q)^+$ hence  $G(\Q)$ is  also  a simple  group modulo  its
centre.  \\

The inclusion of $U_0 ^{\pm}$ in  $G$ is defined over $\Q$ and induces
a map of arithmetic completions $\widehat{U_0 ^{\pm}} \ra \widehat{G}$
of the groups $U_0 ^{\pm}(\Q)$ and $G(\Q)$ respectively; since ( as is
well  known)  the  unipotent   groups  $U_0  ^{\pm}(\Q)$  satisfy  the
congruence   subgroup   property,   it  follows   that   $\widehat{U_0
^{\pm}}=U_0 ^{\pm }(\A_f)$: thus the exact sequence
\[1 \ra  C \ra \widehat{G} \ra  \overline{G} \ra 1,\] splits  over the
subgroups $U_0 ^{\pm}(\A _f)$. 
\end{notation}

\begin{notation} ({\bf Definition of $M$})  

Let $P$ be a proper {\it maximal} parabolic subgroup of $G$ containing
$P_0$, with unipotent radical $U$ with  $U\subset U_0$, and fix a Levi
decomposition $P=LU$  of $P$ with  $L \supset  L_0$.  We write,  as we
may,  $L=S_1S_2 M_sM'$  as  an  almost direct  product  of: $S=S_1$  a
maximal  $\Q$-split  torus,  $S_2$  a maximal  torus  which  is  $\Q$-
anisotropic and  $M_s$ (if not the  trivial group ) is  the product of
all semi-simple  and $\Q$ simple  {\it isotropic groups }  (i.e.  those
which have  a $\Q$-split torus  contained in  it) factors of  $L$, and
$M'$ the  product of  semi-simple $\Q$  simple anisotropic  factors of
$L$.   Since $P$  is a  maximal  proper parabolic  subgroup, we  have:
$S= S_1\simeq {\mathbb G}_m$ has $\Q$-rank one. \\

The $\Q$-rank of  $L$ is then $1$ (the dimension  of $S_1$) plus the
$\Q$ rank of $M_s$; the rest  of the factors have $\Q$-rank zero. \\

[1] If $\Q-rank (G)\geq 2$, the preceding paragraph implies that $M_s$
has  positive dimension.   In particular,  the connected  component of
identity  of the  Zariski closure  of  (the group  of integer  points)
$L(\Z) $ contains $M_s$; in this case we write $M=M_s$. \\

We  note that  if  $\Q$-rank of  $G$  is one,  then  $P=P_0$, $S_1$  has
dimension  one,  and $M_s$  is  trivial.   Consequently, $L_0(\Z)$  is
contained  in $S_2M'$  ( up  to finite  index). We  write $M$  for the
connected component of identity of  $L_0(\Z)$. We will distinguish the
cases \\

[2] $\Q -rank (G)=1$ and $M$ is abelian and \\

[3] $\Q -rank (G)=1$ and $M$ is not abelian. \\

In all  these cases, since  $G$ is  simply connected, it  follows that
$M_s$ is also  simply connected.  The simplicity - modulo  centre - of
$G^+(\Q)$  implies  (  as  is easily  seen)  that  $U^{\pm}(\Q)$  also
generates $G^+(\Q)$.
\end{notation}

Since  the unipotent  groups  $U^{\pm}$ have  the congruence  subgroup
property, every arithmetic subgroup in $G(\Z)$ contains $U^{\pm}(m\Z)$
for  some integer  $m$. Thus  the  inclusion $U^{\pm}  \ra G$  induces
embeddings   $U^{\pm}(\A   _f)    \ra   \widehat{G}$   of   arithmetic
completions. Denote  by $E(m)$  the {\it  normal} subgroup  of $G(\Z)$
generated by $U^{\pm}(m\Z)$. It may or  may not have finite index, but
it is easily  seen that the closure  of $E(m)$ in $G(\A  _f)$ is open.
Thus, there  is a smallest  congruence subgroup containing  $E(m)$; we
call   it   the  congruence   closure   of   $E(m)$  and   denote   it
$Cl(E(m))$. Moreover, $C$  is the inverse limit as $m$  varies, of the
profinite completions of the quotient $Cl(E(m)/E(m)$:
\begin{equation}   \label{elementaryinverselimit}   C  =   \varprojlim
\widehat { (\frac{Cl(E(m))}{E(m)}) },
\end{equation} where the roof denotes  the profinite completion of the
group  involved, and  $m$  varies  over all  integers  as in  equation
(\ref{Casinverselimit}). \\

Denote by $C'$ the image of the congruence kernel $C_L$ in $C$ induced
by the inclusion $L \ra G$.
\begin{lemma}  \label{C'central}  The group  $C'$  is  central in  the
arithmetic completion $\widehat{G}$ of $G(\Q)$.
\end{lemma}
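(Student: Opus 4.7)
The plan is to show that any lift $\tilde c \in \widehat{L}$ of an element of $C_L$, transported to $\widehat{G}$ via $\widehat{L}\to\widehat{G}$, commutes in $\widehat{G}$ with $U^+(\Q)$ and with $U^-(\Q)$. Once that is in hand, the Kneser--Tits theorem (giving $G(\Q)=G(\Q)^+$, which in this setting is generated by $U^{\pm}(\Q)$) together with the density of $G(\Q)$ in $\widehat{G}$ will force $\tilde c$ to be central.

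To exploit the hypothesis $\tilde c \in C_L = \ker(\widehat{L}\to\overline{L})$, I would first pick a sequence $l_i \in L(\Z)$ converging to $\tilde c$ in the arithmetic topology of $\widehat{L}$ (hence of $\widehat{G}$) and simultaneously converging to $1$ in the congruence topology of $\overline{L}$; the latter is equivalent to saying that for every $N \geq 1$ one has $l_i \in L(N\Z)$ for all sufficiently large $i$, and it is available precisely because $\tilde c$ lies in the congruence kernel.

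Next, fix $u\in U^+(\Q)$ and write $u = 1_B + X$ with $X \in \frac{1}{c_0} M_B(\Z)$ for some integer $c_0 \geq 1$. The heart of the argument will be to invoke Lemma~\ref{commutatorlemma}, applied to the inclusion $L\subset G$, with parameters $a = c_0 k$ and $m = k$ for a large integer $k$; this yields
\[ [L((c_0 k)^e\Z),\,u] \subset G(k\Z). \]
Since $L$ normalizes $U^+$, the commutator $[l,u]$ already lies in $U^+$, so in fact $[L((c_0k)^e\Z),u] \subset U^+(k\Z)$. Taking $l = l_i$ with $i$ large enough that $l_i \in L((c_0k)^e\Z)$, and then letting $k \to \infty$, I would conclude that $[l_i,u] \to 1$ in $\widehat{U^+} = U^+(\A_f)$ (by the congruence subgroup property for unipotent groups), and therefore also in $\widehat{G}$ by continuity of $\widehat{U^+} \to \widehat{G}$. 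Rewriting $l_i u l_i^{-1} = [l_i,u]\cdot u$ and passing to the limit in $\widehat{G}$, the left-hand side tends to $\tilde c\, u\, \tilde c^{-1}$ while the right tends to $u$, giving commutation. The identical argument with $U^-$ in place of $U^+$ (both unipotent radicals being normalized by $L$) finishes the proof.

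I do not anticipate a genuine obstacle here: Lemma~\ref{commutatorlemma} is quantitative enough to convert depth of $l_i$ in $L$ directly into depth of $[l_i,u]$ in $U^+$, and everything else is density and continuity. The real difficulty of the paper will presumably lie not in this lemma but in later steps, where one must promote centrality of the comparatively soft $C'$ to centrality of the whole congruence kernel $C$.
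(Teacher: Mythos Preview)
Your proposal is correct and follows essentially the same route as the paper: show that the image of $C_L$ commutes with $U^{\pm}(\Q)$, then invoke Kneser--Tits and density of $G(\Q)$ in $\widehat G$. The only difference is presentational---the paper argues in one line that the conjugation action of $\widehat L$ on $U^{\pm}(\A_f)$, being algebraic (``linear''), factors through the congruence completion $\overline L$ and hence is trivial on $C_L$, whereas you unpack this same fact explicitly via a sequence $l_i \to \tilde c$ and the quantitative commutator bound of Lemma~\ref{commutatorlemma}.
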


\begin{proof}  The  group  $\widehat  {L}$  acts  on  the  completions
$\widehat {U^{\pm}}  \simeq U^{\pm  }({\mathbb A}_f)$.  However, being
linear, this action descends to an action of the congruence completion
$\overline{L}$  of  $L(\Q)$.  Consequently,   the  kernel  $C_L$  acts
trivially on $U^{\pm}(\A  _f)$. Since $U^{\pm }(\Q)$ is  a subgroup of
$U^{\pm}(\A _f)$ it follows that  $C'$ commutes with $U^{\pm}(\Q)$ and
hence with the group generated by  them. It was already seen that this
subgroup is $G(\Q)$  (using the resolution of  the Kneser-Tits problem
\cite{Gille}) and hence  is dense in $\widehat G$.  Therefore, $C'$ is
central in $\widehat G$.
\end{proof}

The group $C/C'$  is the kernel to the map  $\widehat{G}/C'$.  For any
integer $k$ denote by $F(k)$  the normal subgroup in $G(\Z)$ generated
by  $P(k \Z),  U^-(k  \Z)$.  Then  $F(k) \subset  \G  (k)$ and  $F(m)
\subset  \G  (m)$. Let  $Cl  (F(m))$  denote the  smallest  congruence
subgroup of $G(\Z)$ containing $F(m)$. \\

The  congruence kernel  $C$  is  the inverse  limit  of the  profinite
completion $\widehat{\G(m)/E(m)}$ as $m$ varies; the action of $M(\Z)$
on all these  groups induces an action on $C$  and these actions
are compatible. By Lemma  \ref{C'central}, the group $C'$ is central
and   $C/C'$  is   the   kernel  to   the   map  $\widehat{G}/C'   \ra
G(\A_f)$. Moreover, the  topology on $P(\Q)$ induced  by its inclusion
in $G(\Q) \subset \widehat{G}/C'$ is  the congruence topology since we
have gone modulo the congruence kernel  of $L$ = the congruence kernel
of    $P$.      We    then     have,    analogously     to    equation
(\ref{elementaryinverselimit}),

\begin{equation} \label{parabolicinverselimit}
C/C'= \varprojlim \widehat{(\frac{Cl(F(m))}{F(m)})}   
\end{equation}
expressing $C/C'$ as an inverse  limit of the profinite completions of
the quotient groups $Cl(F(m))/F(m)$.

\subsection{The open set ${\mathcal U} =U^{-}P \subset G$}

The map $U^-\times  P \ra G$ given by  multiplication $(u^-,p) \mapsto
u^-p$ is a  map of affine $\Q$-varieties which is  an isomorphism onto
its image which is  a Zariski open set $\mathcal U$ in  $G$. If $g \in
{\mathcal U}  (\Q)=u^{-}p$, then the uniqueness  of this decomposition
says that $u^{-}\in U^-(\Q), \quad p\in P(\Q)$. \\

Given a  prime $p$,  the subset  $U^-(\Z _p)P((\Z  _p)={\mathcal U}(\Z
_p)$ is open in the $\Q _p$  topology and contains $1$. Hence there is
a compact open subgroup $K_p$ of  $G(\Z _p)$ contained in $U^-( \Z _p)
P(\Z _p)$. \\

The  conjugation  action of  $M$  on  $G$  stabilises all  the  groups
$U^{\pm},M ,P^+$ and  hence $M$ stabilises the open  set $\mathcal U$.
Fix an integer $m$;  if $S$ is the set of  primes dividing $m$, denote
by  $R$ the  ring $\prod  _{p\in S}  \Z _p$;  this is  a compact  open
sub-ring in $\Q_S=\prod  _{p\in S} \Q _p$ and the  principal ideal $mR$
generated  by $m$  is a  compact open  ideal in  $R$. Thus  ${\mathcal
U}(mR)=U^-(mR)P(mR)$  is  an open  set  (containing  identity) in  the
product  group $G_S=\prod  _{p\in S}  G(\Q _p)$.   By the  topology on
$G_S$  there  exists  a  compact  open  subgroup  $K_S=G(m'R)  \subset
{\mathcal U}(mR)$ of  $G_S$ for some integer $m'$. We  may choose $m'$
to be divisible only by primes in $S$ since the other primes are units
in  $R$.  Hence  $K_S$  is  open normal  of  finite  index in  $G(R)$.
Consequently  $\Gamma (m')  =G(\Z) \cap  K_S$ is  a congruence  normal
subgroup in $G(\Z)$ with $m'$ divisible by $m$ and only by primes that
divide $m$. \\

We now compute the action of $M(\Z)$ on the latter group $\G(m')$. \\

Given $x\in \G(m')$, write, as  we may, $x=u^-p$ with $u^-\in U^-(\Q)$
and $p \in P(\Q)$.  On the other  hand, viewed as an element of $G(R)$
we have $x=u^-_Rp_R$ with $u^-(mR),  p_R \in P(mR)$. The uniqueness of
decomposing  an element  of $G(\Q)$ as a product $u^-p$  then shows that
$u^-=u^-_R$, $p=p_R$  and are  therefore integral in  $\Z _p$  for all
primes $p$ dividing $m$ and hence have denominators co-prime to $m$ and
all off diagonal entries have numerators divisible by $m$. \\

We have  fixed a linear  $\Q$-embedding $G \subset SL_B  \subset M_B$;
Thus $u^-,p$ and $p^{-1}$ viewed as  matrices in $M_B(\Q)$, are  of the form
identity plus  a matrix having a  common denominator $Da^N$ (  we write
$a^N$ instead of $a$ keeping in mind a future application) with $a$ is
co-prime to $m$ and numerators which are all divisible by $m$:
\[u^-\in U^-\cap (1+\frac{m}{Da^N}M_B(\Z)), \quad  p, p ^{-1} \in P\cap
(1+\frac{m}{Da^N}M_B(\Z)).\]

By Lemma  \ref{commutatorlemma}, there  exists an  integer $e  \geq 1$
($e$ independent of  the $a$ chosen, and depend only  on the embedding
$G \subset SL_B$; the  $a$ in the lemma is replaced  by $Da^N$) so that
we have the inclusion of the commutator subgroups:
\[ [M(D^ea^{eN}), U^- \cap  (1+\frac{m}{Da^N}M_B(\Z)] \subset U^-(m\Z) \subset
F(m), \] and
\[ [ M(D^ea^{eN}),  P\cap  (1+\frac{m}{Da^N}M_B(\Z)]  \subset  P(m\Z)  \subset
F(m).\]

Fix  $x\in G(m')$;  this determines  the  integer $a=a(x)$  is in  the
preceding  paragraph. Fix  $t  \in  M(D^ea^{eN}\Z)$.  We compute  the
conjugate $^t(x)$ for $x\in G(m')$:  write $x=u^-p$; we have seen that
$u^-,p$ have denominators $Da^N$ co-prime  to $m$ and numerators divisible
by $m$.  Then
\[^t(x)=^t(u^-)             ^t             (p)=             tu^-t^{-1}
  (u^-)^{-1}u^-pp^{-1}tpt^{-1}=
\]
\[=  [t,u^-]u^-p  [p^{-1},t]=[t,u^-]x[p^{-1},t]\]  and  the  foregoing
inclusion of commutator subgroups shows that
\begin{equation} \label{conjugationbyM} ^t (x)\in F(m)xF(m)=xF(m),
\end{equation}  (the  last equality  holds  since  $F(m)$ is  normal).
Therefore, the congruence group $M(a^{eN}\Z)$ fixes the coset $xF(m)\in
G(m')F(m)/F(m)$ through the element  $x$; the integer $a=a(x)$ depends
on $x$ and is co-prime to the integer $m$.

\subsection{Centrality in the semi-local case}

If $\Z$ is  replaced by the semi-local subring $\Z_X$  (i.e. $X$ is the
complement of  a {\it  finite set}  $S$ of primes of  $\Q$), then  for the
group $G(\Z_X)$  the congruence  subgroup kernel is  central: consider
the completion $\widehat{G}_A$  of the group $G(\Q)$,  with respect to
the profinite  topology on  the group $G(A)$.   We have  the analogous
exact sequence ($A=\Z_X$ in the following paragraph)
\[1 \ra D \ra {\widehat G}_A \ra G(\widehat A) \ra 1.\] 

The ideals in $A$ are of the  form $mA$ where $m$ is divisible only by
primes which are in the complement $S$ of  $X$ (the complement of $X$ is a
finite set by assumption).

The  group $D$  is  the inverse  limit (as  $m$  varies over  integers
divisible   only   by   primes   in $S$)   of
$\widehat{G(mA)/E(mA)}$   where  the   roof   denotes  the   profinite
completion (actually, the group is  finite in the semi-local case, but
we do not need  to use it).  Every element of  a finite index subgroup
$G(m'A)E(mA)/E(mA)$ ($m'$ as before) may  be replaced by an element of
the form $u^-p=u^-zu$,  where $u^-\in U^-, z\in L, u  \in U$.  But the
elements $u^-$  and $u$ already  lie in $U^{\pm} \cap  G(mA_X) \subset
E(m)$ since the  denominators of the matrix entries  of these elements
are co-prime  to $m$.   It follows  that $D$ is  the image  $C_L'$ (the
congruence subgroup  kernel of  $L$) and is  hence centralised  by the
central  torus $S(\Q)$  in  $L(\Q)$.  However,  all  of $G(\Q)$  still
operates  on  $D$  but  $S(\Q)$  acts  trivially;  therefore,  by  the
simplicity  of  $G(\Q)$  modulo  the   centre,  all  of  $G(\Q)$  acts
trivially; That is, the exact sequence
\[ 1 \ra D  \ra {\widehat G(A)} \ra G({\widehat A}) \ra 1   \]
has central kernel $D$. 

\begin{remark}  The  group $D$  is  actually  trivial; the  congruence
subgroup property  for general  $G$ in the  semi-local case  has almost
been  proved   (\cite{Sury},  \cite{Pr-R2})  but  we   only  need  the
centrality here.
\end{remark}  

\subsection{Commuting subgroups}

The following  proposition was  observed in \cite{R1}  and is  in fact
used in several proofs of centrality (see \cite{Pr-R2}).

\begin{proposition} \label{commutinggroups} Denote, for each prime $p$
of $\Q$,  by $G_p$ the  closed subgroup  of $\widehat G$  generated by
$U^{\pm}(\Q_p)$. Then  $C$ in central in  $\widehat G$ if and  only if
for each pair $p\neq q$, the groups $G_p$ and $G_q$ commute.
\end{proposition}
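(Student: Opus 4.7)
I treat the two implications of the biconditional separately.

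\emph{Forward direction.}  Suppose $C$ is central.  The images of $G_p$ and $G_q$ in the restricted product $G(\A_f)=\prod'_r G(\Q_r)$ lie in distinct local factors and commute; consequently $[G_p,G_q]\subseteq C$.  Centrality of $C$ makes the commutator descend to a biadditive pairing
\[
\varphi\colon G(\Q_p)\times G(\Q_q)\ra C,
\]
via the identity $[ab,c]=[a,c][b,c]$ which holds modulo any central subgroup.  In characteristic zero, the $\Q_p$-analytic unipotent group $U^\pm(\Q_p)$ is divisible (every element is an $n$-th power for every $n\geq 1$).  Since $C$ is a compact abelian group with no nontrivial infinitely-divisible elements, $\varphi$ vanishes on $U^\pm(\Q_p)\times U^\pm(\Q_q)$.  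Local Kneser--Tits yields that $U^\pm(\Q_p)$ generates $G(\Q_p)$, so $\varphi\equiv 0$ and $[G_p,G_q]=1$.

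\emph{Reverse direction.}  Assume $[G_p,G_q]=1$ for $p\neq q$.  Let $H_p:=\overline{\langle G_q:q\neq p\rangle}\subseteq\widehat G$; by hypothesis $G_p$ centralizes $H_p$.  The closed subgroup of $\widehat G$ generated by all $G_r$ contains every $U^\pm(\A_f)$ and hence $G(\Q)$ (via Kneser--Tits), so it is dense in $\widehat G$.  Thus, once each $G_p$ centralizes $C$, this dense subgroup does, forcing $C$ to be central.  This reduces the proof to
\[
C\subseteq H_p \quad\text{for every prime }p.
\]
The image of $H_p$ in $G(\A_f)$ equals $\prod'_{q\neq p}G(\Q_q)$, so $H_p\cdot C$ is the full preimage in $\widehat G$ of this subgroup, and the desired inclusion $C\subseteq H_p$ amounts to saying $H_p$ absorbs $C$.

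To prove $C\subseteq H_p$, I would use the inverse-limit description $C=\varprojlim_m \widehat{Cl(E(m))/E(m)}$ from (\ref{elementaryinverselimit}), restricting the cofinal family to integers $m$ coprime to $p$ (legitimate by the sufficiently-deep observation made in \S2.1).  For such $m$, classes in $Cl(E(m))/E(m)$ admit representatives built from $U^\pm(\Q_q)$ with $q\neq p$ via strong approximation for the unipotent groups $U^\pm$, and such representatives lie in $G_q\subseteq H_p$.  Passing to the inverse limit, each $c\in C$ is a limit in $\widehat G$ of elements of the closed subgroup $H_p$, hence belongs to $H_p$.

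The principal obstacle is precisely this last step---producing the approximating representatives inside $H_p$.  It requires strong approximation for $U^\pm$ together with the commutator estimates of Lemmas~\ref{Dlemma} and \ref{commutatorlemma} to track how $p$-adic contributions emerge at each finite stage.  Once $C\subseteq H_p$ is established for every prime $p$, centrality of $C$ follows from the centralizer argument.
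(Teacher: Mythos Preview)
Your forward direction is correct and genuinely different from the paper's. The paper fixes $c=[u,u^-]\in[U^+(\Q_p),U^-(\Q_q)]$, uses that the split torus $S(\Q)$ acts trivially on the central element $c$, invokes weak approximation to get density of $S(\Q)$ in $S(\Q_p)\times S(\Q_q)$, and then contracts $u$ to the identity via $s_k\in S(\Q_p)$ to conclude $c=1$. Your route---biadditivity of the commutator pairing into the central $C$, divisibility of $U^\pm(\Q_p)$, and the fact that a profinite abelian group has no nontrivial divisible elements---is a clean alternative that avoids the torus entirely.

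Your reverse direction, however, has a real gap. The step ``restricting the cofinal family to integers $m$ coprime to $p$'' is not legitimate: the family $\{m:(m,p)=1\}$ is \emph{not} cofinal for the inverse system (no such $m$ gives $E(m)\subset G(p\Z)$, for instance). The ``sufficiently deep'' remark in \S2.1 goes in the opposite direction---it permits restricting to $m$ highly \emph{divisible} by a finite set of primes, not to $m$ \emph{avoiding} a prime. So the inverse limit over such $m$ does not recover $C$, and your proposed proof that $C\subseteq H_p$ collapses at this point. You yourself flag the ``approximating representatives inside $H_p$'' step as the principal obstacle, and indeed it is: there is no evident way to produce them.

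The paper does not attempt to prove $C\subseteq H_p$ at all. Instead it argues as follows. First, each $G_p$ (and hence each $H_p=G^p$) is \emph{normal} in $\widehat G$, since it is normalised by every $U^\pm(\Q_r)$ and these generate a dense subgroup by Kneser--Tits. One may therefore form the quotient $\widehat G/G^p$, and the induced sequence
\[
1\ \ra\ C/C^p\ \ra\ \widehat G/G^p\ \ra\ G(\Q_p)\ \ra\ 1
\]
is identified with the arithmetic--congruence sequence for the \emph{semi-local} ring $A_p=\Z_{(p)}$. The centrality of the congruence kernel in the semi-local case (established in the preceding subsection of the paper) then gives that $C/C^p$ is central in $\widehat G/G^p$; equivalently $[C,\widehat G]\subseteq C^p\subseteq G^p$. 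Since $G_p$ centralises $G^p$ by hypothesis, $G_p$ centralises $[C,\widehat G]$; this holds for every $p$, so $[C,\widehat G]$ is central in $\widehat G$. Finally, for fixed $c\in C$ the map $g\mapsto[g,c]$ is then a homomorphism $G(\Q)\to[C,\widehat G]$ into an abelian group, hence trivial by the simplicity of $G(\Q)$ modulo centre, and $C$ is central. The key input you are missing is the reduction to the semi-local case via the quotient $\widehat G/G^p$.
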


\begin{proof} Suppose $G_p,G_q$ commute for different primes $p,q$. We
have the exact sequence
\[1 \ra C_p =C \cap G_p \ra G_p \ra G(K_p) \ra 1.\]
This yields the exact sequence 
\[ 1 \ra C^p  \ra G^p \ra G(\A_{K_f\setminus \{ p\}})  \ra 1 .\] Here,
$\A_{f\setminus \{p\}}$  denotes the  sub-ring of  the ring  of finite
adeles of  $\Q$ which  is a  restricted direct  product of  $\Q_l$ for
primes different  from the  prime $p$.  $G^p$  is the  closed subgroup
generated by $G_q$  with $q\neq p$.  $C^p$ is the  intersection of $C$
with the closed subgroup $G^p$. \\

The group $G_p$ is normal in $\widehat G$ since it is normalised (even
centralised) by $U^{\pm}(\Q_q)$  for each $q\neq p$  and normalised by
(indeed, contains)  $U^{\pm}(\Q _p)$ by assumption.   Therefore, $G_p$
is normalised by $U^{\pm}(\Q)$. The  group generated by $U^{\pm }(\Q)$
is, by  the solution to the  Kneser-Tits problem, all of  $G(\Q)$, and
since  $G(\Q)$ is  dense in  $\widehat G$,  it follows  that $G_p$  is
normalised by $\widehat G$. Hence $G_p$ is a closed normal subgroup of
$\widehat G$.  Therefore, so is $G^p$.  Thus we may form  the quotient
${\widehat G}/G^p$  which is a  quotient of  $G_p$. We have  the short
exact sequence
\[  1 \ra  C/C^p \ra  {\widehat G}/G^p  \ra G(\Q_p)=G(\A  _f)/G(\A _{f
\setminus \{p\}})  \ra 1 \]  where in the quotient  ${\widehat G}/G_p$
the closure of the group $G(A_p)$ is a profinite group and maps to the
congruence  completion $G(\Z_p)$;  $A_p$ here  is the  semi-local ring
consisting  of rational  numbers of  the form  $\frac{a}{b}$ with  $b$
co-prime to the prime $p$. Moreover, we have the exact sequence 
\[ 1 \ra C/C^p \ra \widehat{G(A_p)} \ra G(\Z _p) \ra 1.\]

By the  preceding subsection,  the group  $G(A_p)$ has  the congruence
subgroup  property  (in  the  sense  that  the  associated  congruence
subgroup kernel is central). Hence the extension $C/C^p$ is central in
$\widehat G/  C^p$ and hence the  commutator subgroup $C'=[C,{\widehat
G}]$ is contained  in $C^p$ for every prime $p$.   In particular, $C'$
is  centralised  by $G_p$  for  every  prime  $p$  and hence  $C'$  is
centralised by $\widehat G$. \\

Hence for $g\in {\widehat G}$ and $c\in C$, the map $\psi : g\mapsto
gcg^{-1}c^{-1}$ is  a homomorphism into the central  subgroup $C'$. In
view of  the simplicity of  $G(\Q)$ this means  that the map  $\psi$ is
trivial and hence that $C$ is central in $\widehat G$. This proves the 
``if'' part of the proposition. \\

To  prove the  only if  part,  we argue  as follows.   Suppose $C$  is
central.  Consider an element $c$  in the commutator set: $c=[u,u^{-}]
\in [U^+(\Q  _p), U^{-}(\Q _q)]$. On  this the group $S(\Q)$  (of $\Q$
rational  points of  the split  torus $S$)  acts by  conjugation.  The
action   of  $S(\Q)$   on  $c$   is  trivial   by  assumption;   hence
$[u,u^-]=[^s(u),^s(u^-)]$   for   all   $s   \in   S(\Q)$.    By   weak
approximation,  $S(\Q)$   is  dense  in  the   product  $S(\Q_p)\times
S(\Q_q)$. It follows by the density that
\[[u,u^{-}]= [^{s}(u), u^{-}] \] for all $s\in S(\Q _p)$. Since we can
choose a sequence  $s_k \in S(\Q _p)$ such  that $^{s_k}(u)$ contracts
to identity  we get  $[u,u^-]=1$. Since $u$  is arbitrary,  it follows
that $U^+(\Q_p)$ commutes with $u^{-}$. In other words, $G_p$ commutes
with $G_q$.
\end{proof}

\section{An Application of Dirichlet's Theorem}

\subsection{Dirichlet theorem on primes}

Let $M\subset GL_n$ be a linear algebraic group defined over $\Q$. Fix
a prime $l$, and an integer $m \geq 2n$. The unit group $(\Z/l^m\Z)^*$
is cyclic of order $l^{m-1}(l-1)$ if $l$  is odd and if $l=2$, then it
has an element of order $2^{m-2}$.  Consider the set $S$ of primes $p$
such that the order of $p$  modulo $l$ is either $l^{m-1}(l-1)$ if $l$
is odd and  $2^{m-2}$ if $l=2$.  By Dirichlet's  theorem on infinitude
of primes in arithmetic progressions, the set $S$ is infinite.

Fix  $l$ and  $p \in  S$ as  above. Write  $e=e_l(p)$ for  the largest
integer such that  the finite group $M({\mathbb F}_p)$  has an element
of exponent $l^e$.

\begin{lemma} The exponent $e_l(p)$ satisfies the estimate 
\[e_l(p)\leq [\frac{n}{l-1}]+ [\frac{n}{l(l-1)}]+ \cdots+ [\frac{n}
{l^{m-1}(l-1)}] ,\] where $[x]$ denotes the integral part. 
\end{lemma}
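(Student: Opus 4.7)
The plan is to bound $e_l(p)$ by the $l$-adic valuation of $|GL_n(\F_p)|$ and to compute the latter via the lifting-the-exponent lemma (LTE) together with Legendre's formula.

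Since $p\in S$ has positive order modulo $l^m$, we have $p\neq l$, and the embedding $M(\F_p)\hookrightarrow GL_n(\F_p)$ yields $l^{e_l(p)}\mid |GL_n(\F_p)|$. Writing $|GL_n(\F_p)|=p^{n(n-1)/2}\prod_{i=1}^{n}(p^i-1)$ and using $\gcd(p,l)=1$, the task reduces to estimating $\sum_{i=1}^{n} v_l(p^i-1)$.

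For odd $l$, the hypothesis that $p$ has order $l^{m-1}(l-1)$ modulo $l^m$ forces the order of $p$ modulo $l$ to be $d:=l-1$ and pins down $v_l(p^d-1)=1$ exactly: if it were $\geq 2$, the order of $p$ modulo $l^m$ would be a proper divisor of $l^{m-1}(l-1)$. LTE then gives $v_l(p^i-1)=0$ when $d\nmid i$ and $v_l(p^i-1)=1+v_l(i/d)$ when $d\mid i$. Writing $i=dj$ with $1\leq j\leq \lfloor n/d\rfloor$, summing, and invoking Legendre's formula $v_l(N!)=\sum_{k\geq 1}\lfloor N/l^k\rfloor$, we obtain
\[
\sum_{i=1}^{n} v_l(p^i-1)=\left\lfloor\frac{n}{d}\right\rfloor+v_l\!\bigl(\lfloor n/d\rfloor!\bigr)=\sum_{k\geq 0}\left\lfloor\frac{n}{l^k(l-1)}\right\rfloor,
\]
matching the stated bound. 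The assumption $m\geq 2n$ ensures $l^{m-1}(l-1)>n$, so every term of the infinite sum with $k\geq m$ vanishes and the sum truncates at $k=m-1$ as written.

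For $l=2$, the argument runs in parallel but LTE takes different forms depending on whether $p\equiv 1$ or $3\pmod 4$; the hypothesis that $p$ has order $2^{m-2}$ modulo $2^m$ restricts $p$ to residue classes $3$ or $5$ mod $8$, which supplies the base values of $v_2(p-1)$ and $v_2(p+1)$ needed to run the same Legendre-style summation. The main obstacle is the $l=2$ case, where one must carefully track both residue classes of $p$ modulo $4$ and verify that the resulting numerics collapse into the single combined bound claimed in the lemma.
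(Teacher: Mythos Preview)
For odd $l$ your argument is correct and is essentially the paper's own proof: both bound $e_l(p)$ by $v_l\bigl(|GL_n(\F_p)|\bigr)=\sum_{i=1}^n v_l(p^i-1)$ and then evaluate this sum. The paper does the evaluation via the layer-cake identity $\sum_i v_l(p^i-1)=\sum_{j\geq 1}\bigl|\{i\leq n: l^j\mid p^i-1\}\bigr|$, reading off each cardinality from the hypothesis that $p$ generates $(\Z/l^m\Z)^*$; your LTE-plus-Legendre computation is an equivalent repackaging of the same count.

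The $l=2$ case, however, does not work out as you hope, and this is a genuine gap. For $p\equiv 5\pmod 8$ (one of the two residue classes you correctly isolate), LTE gives $v_2(p^i-1)=v_2(p-1)+v_2(i)=2+v_2(i)$ for every $i\geq 1$, so $v_2\bigl(|GL_n(\F_p)|\bigr)=2n+v_2(n!)$; already for $n=1$ this equals $2$, while the lemma's bound $\sum_{k\geq 0}\lfloor n/2^k\rfloor$ equals $1$. Thus the stated inequality is simply false for $l=2$ when argued through the group order, and the numerics cannot collapse. The paper's own proof in fact treats only odd $l$ (note the parenthetical ``if $l$ is odd'') and supplies no argument at $l=2$; the lemma as literally stated is not correct at the prime $2$. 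What the paper actually needs downstream is only the cruder bound $e_2(p)\leq 2n$ of the subsequent corollary, and that \emph{does} hold, but by a different route: an element of $GL_n(\F_p)$ of order $2^e$ has an eigenvalue which is a primitive $2^e$-th root of unity, forcing $\mathrm{ord}_{2^e}(p)\leq n$; since $p\equiv 3$ or $5\pmod 8$ has order $2^{e-2}$ in $(\Z/2^e\Z)^*$ for all $3\leq e\leq m$, one gets $2^{e-2}\leq n$, i.e.\ $e\leq 2+\lfloor\log_2 n\rfloor\leq 2n$. So the application is salvageable, but not through the group-order bound you sketch.
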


\begin{proof} We have  $M \subset GL_n$. We need  only prove the lemma
for  $GL_n$ since  the  exponent of  the  subgroup $M({\mathbb  F}_p)$
divides the  exponent of the  larger group $GL_n({\mathbb  F}_p)$. The
order  of   $GL_n({\mathbb  F}_p)$  is
\[(p^n-1)(p^n-p^2)\cdots  (p^n
-p^{n-1}).\]

Now  $p$   is  co-prime  to   $l$,  and  generates  the   cyclic  group
$(\Z/l^m\Z)^*$ (if  $l$ is odd). For  $j\leq m $, let  $X(l^j)$ denote
the  set of  $i\leq n$  with $p^i\equiv  1 (~mod  \quad l^j)$  and let
$x(l^j)$ be  the cardinality of  the set $X(l^j)$.  The  assumption on
$p$ implies  that if  $i \in  X(l^j)$, then $i$  must be  divisible by
$l^{j-1}(l-1)$.   Hence the  number  of  $i \leq  n$  such that  $l^j$
divides        $p^i-1$        is       the        integral        part
$[\frac{n}{l^{j-1}(l-1)}]=x(l^j)$.\\

The two  preceding paragraphs  imply that  for each  $j \leq  m$, then
number of factors in the product $\prod _{i=1}^n (p^i-1)$ divisible by
$l^j$ is  $x(l^j)$ (=  the integral  part $[\frac{n}{l^{j-1}(l-1)}]$).
Now,  $X(l)\supset  X(l^2)\supset  \cdots \supset  X(l^m)$;  if  $i\in
X(l^j)\setminus X(l^{j+1})$,  then the  highest power of  $l$ dividing
the factor $p^i-1$  is $j$. Therefore, the largest power  of $l$ which
divides     the     order      of     $GL_n({\mathbb     F}_p)$     is
$1(x(l)-x(l^2))+2(x(l^2)-x(l^3))                              +\cdots+
mx(l^m)=x(l)+x(l^2)+x(l^3)+\cdots +x(l^m)$. \\

The  last two  paragraphs  imply the  lemma  for $GL_n$  and hence  for
arbitrary $M\subset GL_n$.
\end{proof}

\begin{corollary}   \label{2ncorollary}    If   $l\geq    n+2$,   then
  $e_l(p)=0$. Moreover, in all cases, $e_l(p)\leq 2n$. Let $R_l=l^{e_l(p)}$.
  Then $R_l \leq (n+1)^{2n}$. 
\end{corollary}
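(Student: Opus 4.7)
The corollary has three claims to verify, all of which should follow directly by estimating the geometric-type sum from the preceding lemma.

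First, for the vanishing claim, I would observe that if $l \geq n+2$ then $l-1 \geq n+1 > n$, so $\frac{n}{l-1} < 1$ and hence $[\frac{n}{l-1}] = 0$. Since the factors $l^{j-1}(l-1)$ in the denominators only get larger for $j \geq 2$, every term in the bound from the lemma vanishes, giving $e_l(p) = 0$.

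Second, for the universal bound $e_l(p) \leq 2n$, I would majorize the finite sum by the infinite geometric series:
\[
e_l(p) \;\leq\; \sum_{j=1}^{m} \Big[\frac{n}{l^{j-1}(l-1)}\Big] \;\leq\; \frac{n}{l-1}\sum_{j=0}^{\infty}\frac{1}{l^{j}} \;=\; \frac{nl}{(l-1)^2}.
\]
A one-line check shows the right-hand side is maximized at $l=2$, where it equals $2n$, and is at most $\frac{3n}{4}$ for $l \geq 3$. (For $l = 2$ one can alternatively note that $\sum_{j\geq 0} [n/2^j] \leq 2n$ directly.) Hence $e_l(p) \leq 2n$ unconditionally.

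Third, for $R_l = l^{e_l(p)} \leq (n+1)^{2n}$, I would split into cases. If $l \geq n+2$, the first part of the corollary forces $R_l = 1$, which is clearly $\leq (n+1)^{2n}$. If $l \leq n+1$, then using $e_l(p) \leq 2n$ from the second part,
\[
R_l \;=\; l^{e_l(p)} \;\leq\; l^{2n} \;\leq\; (n+1)^{2n}.
\]
No part of this is subtle; the only mild care is in handling the $l=2$ case, where the cyclic factor in $(\Z/l^m\Z)^\ast$ is of order $2^{m-2}$ rather than $l^{m-1}(l-1)$, but this is already accounted for in the lemma's conclusion and so does not affect the bookkeeping here.
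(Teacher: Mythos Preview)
Your proof is correct and follows essentially the same route as the paper: both arguments bound the sum from the lemma by the geometric series $\frac{n}{l-1}\sum_{j\geq 0} l^{-j}$, observe this is at most $2n$ (with equality forced only at $l=2$), note the first term already vanishes when $l\geq n+2$, and then split the bound on $R_l$ into the cases $l\geq n+2$ and $l\leq n+1$. Your write-up is slightly more explicit in computing $\frac{nl}{(l-1)^2}$ and checking its maximum, but there is no substantive difference.
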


\begin{proof} The formula  for $e_l(p)$ as a sum in  the lemma is such
that   all    the   terms    are   bounded    by   the    first   term
$[\frac{n}{l-1}]$. This  first term  is $0$  if $  l \geq  n+2$. Hence
$e_l(p)=0$.

The formula also shows that
\[ e_l(p)\leq \frac{n}{l-1}+\frac{n}{(l-1)l}+ \cdots \leq 2n.\] Hence 
for $l\geq n+2$, $R_l=1$ and otherwise, $R_l=l^{e_l(p)} \leq (n+1)^{2n}$. 

\end{proof}

As before, we let $M \subset  GL_n$ an algebraic subgroup defined over
$\Q$ and $l$  a prime.  Let $a  \geq 1,b \geq 2$  be co-prime integers.
Consider the arithmetic progression $a+bx: x=0,1,2,\cdots $. For $m\in
\Z$, denote by  $M(m)$ the principal congruence subgroup  of level $m$
in $M(\Z)$, and $R_l(m)$ be the $l$-exponent (the largest power of $l$
which divides the exponent) of the finite group $M(\Z)/M(m\Z)$. Denote
by  $R_l(a,b)$  the  infimum of  $R_l(a+bx):  x=0,1,2,3,\cdots$.   The
$l$-exponent  of  the quotient  $M(\Z)/\D$  where  $\D$ is  the  group
generated  by $\{M(a+bx);x\in  \Z\}$  is clearly  no  bigger than  the
$l$-exponent   $R_l(a+bx)$   of   $M(\Z)/   M((a+bx)\Z))$   for   each
$x$. Therefore the $l$ exponent of $M(\Z)/\D$ is $\leq R_l(a,b)$.

\begin{proposition} There  is an integer  $R_l$ depending only  on $n$
and not on $a,b$ such that $R_l(a,b) \leq R_l$.
\end{proposition}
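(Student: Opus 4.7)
My plan is to construct, for any coprime pair $(a,b)$, an integer $m = a+bx$ with $x\geq 0$ and $\gcd(m,l)=1$ all of whose prime factors lie in the set $S$ from the preceding lemma. Corollary~\ref{2ncorollary} will then bound $R_l(q)\leq l^{2n}$ for each prime $q\mid m$, and the Chinese remainder decomposition $M(\Z/m\Z) \cong \prod_{q^e\|m} M(\Z/q^e\Z)$, together with the fact that $M(\Z/q^e\Z)\to M(\F_q)$ has $q$-power kernel (so contributes nothing to the $l$-exponent for $q\neq l$), gives $R_l(m) = \max_{q\mid m} R_l(q) \leq l^{2n}$. This proves the proposition with $R_l = l^{2n}$ (and indeed $R_l = 1$ for $l\geq n+2$).

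When $\gcd(b,l) = 1$, I combine the conditions $p\equiv a\pmod b$ and $p\equiv g\pmod{l^{2n}}$, for $g$ any fixed element of maximal order in $(\Z/l^{2n}\Z)^*$, by CRT into a single arithmetic progression modulo $bl^{2n}$, and invoke Dirichlet's theorem to obtain a prime $p = a+bx$ in $S$; take $m:=p$.

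When $l\mid b$, write $b = l^k b'$ with $\gcd(b',l)=1$ and set $N = \max(k,2n)$. Since every element of the progression is $\equiv a\pmod{l^k}$, no prime of the progression need lie in $S$, so I will take $m$ composite. Let $\tilde S \subset (\Z/l^N\Z)^*$ denote the residue set of primes in $S$: primitive roots of $(\Z/l^N\Z)^*$ when $l$ is odd, and $\{\pm 5^i : i\text{ odd}\}$ when $l=2$. In either case $\tilde S$ generates $(\Z/l^N\Z)^*$ as a group — trivially for $l$ odd, and for $l=2$ because $5$ and $-5$ already generate $\Z/2 \times \Z/2^{N-2}$ — so the same holds after projection to $(\Z/l^k\Z)^*$. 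Since a finite subsemigroup of a group equals the subgroup it generates, I can write $a \equiv \bar s_1\cdots \bar s_r \pmod{l^k}$ with $\bar s_i \in \tilde S \bmod l^k$. Lifting each $\bar s_i$ to $s_i\in\tilde S$, and choosing $t_i\in(\Z/b'\Z)^*$ with $\prod_i t_i\equiv a\pmod{b'}$ (for example $t_1=a$ and $t_i=1$ for $i\geq 2$), I apply Dirichlet and CRT to pick distinct primes $p_i$ with $p_i\equiv s_i\pmod{l^N}$ and $p_i\equiv t_i\pmod{b'}$, taken large enough that $m:=\prod_i p_i \geq a$. Then each $p_i\in S$ and $m\equiv a\pmod b$, so $m = a+bx$ for some $x\geq 0$, and $R_l(m)\leq l^{2n}$ as desired.

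The main obstacle I expect is the group-theoretic step in the second case — verifying that $\tilde S$ generates $(\Z/l^N\Z)^*$ and that positive products of its elements cover every residue. This is transparent for $l$ odd, but for $l = 2$ it requires the observation that although no single element of $\tilde S$ generates the non-cyclic group, the pair $\{5,-5\} \subset \tilde S$ does. Beyond this group-theoretic ingredient, the argument is a routine CRT-plus-Dirichlet assembly.
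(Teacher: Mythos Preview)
Your proof is correct and follows essentially the same approach as the paper: construct an element $m=a+bx$ of the progression as a product of primes in $S$ via Dirichlet and CRT, then bound $R_l(m)$ using Corollary~\ref{2ncorollary} together with the fact that the kernels of $GL_n(\Z/q^e\Z)\to GL_n(\F_q)$ are $q$-groups. Your treatment of $l=2$ (using that $\{5,-5\}\subset\tilde S$ generate the non-cyclic group $(\Z/2^N\Z)^*$) is in fact more careful than the paper's, which tacitly writes $a\equiv p^h\pmod{l^m}$ for a single generator $p$.
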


\begin{proof}  Let $l^e$  be the  largest power  of $l$  dividing $b$;
hence $b=l^ec$  with $c$ co-prime to  $l$.  Fix an integer  $m \geq 2n$
{\it and} $m\geq e$.  We find a  prime $p$ such that $p$ generates the
unit   group  $(\Z/l^m\Z)^*$.    Thus  $p\in   S$  of   the  preceding
lemma. Suppose $a=p^h$  modulo $l^m$.  By Dirichlet's  theorem, we can
find  arbitrarily large  distinct primes  $p_1, \cdots,  p_{h-1}$ such
that  $p_j \equiv  1 \quad  (mod ~c)$,  and $p_j  \equiv p  \quad (mod
~l^m)$ ($j\leq  h-1$).  We  also choose a  large prime  $p_h$ distinct
from $p_1,  p_2, \cdots,  p_{h-1}$ such that  $p_h\equiv a  \quad (mod
~c)$  and $p_h\equiv  p  \quad (mod  ~l^m)$.   Then $p_1p_2\cdots  p_h
\equiv a  \quad (mod ~b)$.   Now $GL_n(\Z/p_1p_2\cdots p_h\Z)$  is the
product  of $GL_n({\mathbb  F}_{p_i})$, and  the $l$  exponent of  the
product is the supremum of the exponents of $GL_n({\mathbb F}_{p_i})$.
The  latter  is, by  the  lemma,  bounded  only  by an  integer  $R_l$
depending  on  $n$:  $R_l  \leq  l^{2n}  \leq  (n+1)^{2n}$  (Corollary
\ref{2ncorollary}).  The proposition follows.

\end{proof}

We are  now in a  position to prove Theorem  \ref{dirichletlinear}. We
restate it  ( to  save notation, we  replace $bm$ by  $b$, as  we may,
since $a$ is still co-prime to $bm$) as follows.

\begin{proposition} \label{dirichlet}  Let $M\subset GL_n$ be  a linear
algebraic group defined over $\Q$, and $a,b$ two co-prime integers. Let
$M(\Z)$ be  fixed, and denote by  $N$ the (normal) group  generated by
the congruence  subgroups $M(ax+b)$ with  $x\in \Z$ of  $M(\Z)$. There
exists an integer $R$ depending only  on $n$ such that the exponent of
every element of  $M(\Z)/N$ divides $R$; in particular,  the group $\D=
M(\Z)^R$ generated by $R$-th powers of $M(\Z)$ lies in $N$.

\end{proposition}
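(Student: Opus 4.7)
The plan is to aggregate the per-prime exponent bounds supplied by the preceding proposition into a single uniform bound on the exponent of $M(\Z)/N$; the hard work (the Dirichlet-theorem argument) has already been carried out in that proposition, so what remains here is essentially bookkeeping.

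First, I would observe that since $N$ contains $M((a+bx)\Z)$ for every $x\in\Z$, the quotient $M(\Z)/N$ is a quotient of $M(\Z)/M((a+bx)\Z)$ for each $x$. Consequently, for each prime $l$, the $l$-part of the exponent of $M(\Z)/N$ divides $R_l(a+bx)$ for every $x$, and hence divides $R_l(a,b)=\inf_x R_l(a+bx)$. The preceding proposition then tells us that this $l$-part is at most $R_l$, an integer depending only on $l$ and $n$.

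Next, I would invoke Corollary \ref{2ncorollary}: for every prime $l\geq n+2$ one has $R_l=1$, so only the finitely many primes $l\leq n+1$ can contribute nontrivially. I would then set $R=\prod_{l\leq n+1,\ l\text{ prime}} R_l$, an integer depending only on $n$ (crudely bounded by $(n+1)^{2n\pi(n+1)}$ using the estimate $R_l\leq (n+1)^{2n}$). Since the order of the image in $M(\Z)/N$ of any $g\in M(\Z)$ has, at each prime $l$, an $l$-part dividing $R_l$, that order divides $R$. Hence $g^R\in N$ for all $g\in M(\Z)$, and the subgroup generated by the $R$-th powers of $M(\Z)$ lies in $N$, as required.

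The only real obstacle was already overcome in the preceding proposition, which used Dirichlet's theorem in an essential way to extract a prime $p$ generating $(\Z/l^m\Z)^*$ within the arithmetic progression $a+b\Z$. The only additional point here is that one must control all primes simultaneously, which Corollary \ref{2ncorollary} achieves by reducing the infinite family of primes $l$ to the finite set $l\leq n+1$.
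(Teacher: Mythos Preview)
Your proof is correct and follows essentially the same route as the paper: bound the $l$-part of the exponent of $M(\Z)/N$ by the $R_l$ of the preceding proposition, invoke Corollary~\ref{2ncorollary} to discard all primes $l\geq n+2$, and set $R=\prod_{l\leq n+1}R_l$. The paper's proof is terser but identical in substance, including the same crude bound $R\leq (n+1)^{2n\,\pi(n+1)}$.
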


\begin{proof} This is a simple  application of the preceding lemma. We
first prove this for elements whose exponents are a power of the prime
$l$. In that case, the integer $R$ is the $R_l \leq (n+1)^{2n}$ of the
preceding  lemma.   Moreover,   by  Corollary  \ref{2ncorollary},  the
$l$-exponent is $1$ unless $l$ is a prime with $l\leq n+1$.  Hence the
exponent of $M/N$ is
\[  R =  \prod _{l\leq  n+1}R_l  \leq \prod  _{l \leq  n+1}(n+1)^{2n}=
\big{[}(n+1)^{2n}\big{]}  ^{\pi (n+1)},\]  where  $\pi  (n+1)$ is  the
number  of  primes $l  \leq  n+1$.  This  proves the  proposition  and
equivalently Theorem \ref{dirichletlinear}.

\end{proof}

\newpage
\section{Proof of  centrality when $\Q  -rank (G)=1$ and
  $M_0 (\Z)$ is virtually abelian}

\subsection{generalities}

\begin{notation}  For general  results on  reductive algebraic  groups
over  arbitrary fields,  we  refer to  the article  of  Borel and  Tits
\cite{BT}. \\

We assume  that $\Q -rank  (G)=1$.  Then the proper  maximal parabolic
subgroup $P=P_0$  is a minimal  parabolic subgroup defined  over $\Q$,
$U$ the unipotent  radical of $P$, and $P=LU$ a  Levi decomposition of
$P$. Take $S\subset  L$ a maximal $\Q$-split torus in  $L$ (and in $G$
since  $\Q-rank (G)=1$.   Moreover, since  $\Q-rank (G)=1$,  the group
$L(\Q)$ consists entirely of semi-simple elements. \\

By Corollary (5.8)  of \cite{BT}, the roots of $S$  on the Lie algebra
$\fg$ form a (not necessarily reduced)  root system, of rank one.  But
in a  root system,  the only  multiples of  a root  (see section  2 of
\cite{Ser2}) $\a$ are $\pm \a, \pm  2\a$. By choosing $\a$ suitably, we
see that  as a  module over  $S\simeq {\mathbb  G}_m$ the  Lie algebra
$\fu$  of  $U^+$ decomposes  as  $\fu  =\fg  _\a \oplus  \fg  _{2\a}$;
similarly the Lie algebra $\fu ^-$  of $U^-$ decomposes as $\fu ^-=\fg
_{-\a}\oplus \fg _{-2\a}$.   Since $S$ is in the centre  of $L$, these
decompositions  are stable  under the  adjoint action  of $L$.   If we
denote by $\fl$ the Lie algebra of $L$ then we have the decomposition
\[ \fg  =\fu ^-\oplus \fl \oplus  \fu = \fg _{-2  \a}\oplus \fg _{-\a}
\oplus \fl  \oplus \fg _\a \oplus  \fg _{2\a}.\] As before,  denote by
$M=ZCl (L(\Z))^0$ the  connected component of identity  of the Zariski
closure of the group $L(\Z)$ of integer points of $L$. \\

{\it  In  this section,  we  assume  that  $M$  is abelian  i.e.  that
$L (\Z)$ is virtually abelian}.
\end{notation}

\begin{lemma} $M$ is in the centre of $L$.
\end{lemma}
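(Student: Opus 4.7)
The plan is to show that $M$ is a $\Q$-torus, then that $L$ normalises $M$ via the commensurability of arithmetic subgroups under conjugation by $L(\Q)$, and finally to deduce that the conjugation action $L\to \underline{\mathrm{Aut}}(M)$ is trivial because the target is zero-dimensional.

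First I would show that $M$ is a $\Q$-torus. The subgroup $\G_0:=L(\Z)\cap M$ has finite index in $L(\Z)$ (since $M$ is the identity component of the Zariski closure of $L(\Z)$) and is therefore Zariski dense in $M$. As already recorded in the notation block preceding the lemma, $\Q$-$\mathrm{rank}(G)=1$ with $P=P_0$ forces every element of $L(\Q)$ to be semisimple, so $\G_0$ consists of semisimple elements. In characteristic zero, the connected commutative $\Q$-group $M$ decomposes canonically as $M_t\times M_u$ (maximal torus times unipotent vector group); the projection of any semisimple element of $M$ to $M_u$ vanishes, so the Zariski density of $\G_0$ forces $M_u=1$, and $M=M_t$ is a $\Q$-torus.

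Next I would argue that $L$ normalises $M$. For any $g\in L(\Q)$, the conjugate $gL(\Z)g^{-1}$ is an arithmetic subgroup of $L$, hence commensurable with $L(\Z)$; two commensurable subgroups of $L(\Q)$ have Zariski closures with the same identity component (each Zariski closure is a finite union of cosets of the Zariski closure of the intersection, and a finite-index closed subgroup of an algebraic group shares the identity component). Hence $gMg^{-1}=M$, so $L(\Q)$ normalises $M$, and by Zariski density of $L(\Q)$ in the connected $\Q$-group $L$, all of $L$ normalises $M$. The conjugation action is then a $\Q$-morphism of algebraic groups $\var:L\to \underline{\mathrm{Aut}}(M)$. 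Since $M$ is a torus, $\underline{\mathrm{Aut}}(M)$ is \'etale over $\Q$ (its geometric points form the subgroup of $GL_n(\Z)$ preserving the character lattice of $M$), hence zero-dimensional. The image of the connected variety $L$ under $\var$ is irreducible, so it is a single point in this zero-dimensional scheme; as $\var$ is a group homomorphism, this point must be the identity, and $L$ centralises $M$. Thus $M\subseteq Z(L)$.

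The only step where I would pause is the reduction of $M$ to a torus in the first paragraph, which needs the ambient fact that $L(\Q)$ consists of semisimple elements; but this is precisely what the excerpt asserts just above the lemma, so there is no real obstacle. The remainder of the argument is standard algebraic-group formalism and makes no use of the assumption that $M$ is abelian beyond what is encoded in the hypothesis of the lemma.
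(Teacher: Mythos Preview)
Your argument is correct and follows essentially the same three-step scheme as the paper: normality of $M$ in $L$ via commensuration of $L(\Z)$ by $L(\Q)$, reduction to the case of a torus, and then triviality of the conjugation map because $L$ is connected while $\underline{\mathrm{Aut}}$ of a torus is zero-dimensional. The one small divergence is in the middle step: the paper deduces that $M$ is a torus from the fact that $M$, being normal in the reductive group $L$, is itself reductive (and connected abelian reductive means torus), whereas you instead use the semisimplicity of the elements of $L(\Q)$ to kill the unipotent factor in the decomposition $M=M_t\times M_u$; both are short and standard, and neither buys anything the other does not.
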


\begin{proof} Since $L(\Q)$ commensurates $L(\Z)$, it follows that
the identity component $M$ of the Zariski closure of $L(\Z)$ is normal
in $L$.  Now $L$ is  reductive and hence so  is $M$. Since  $M$ is
abelian,  $M$  is  a  torus   of  dimension  $d$  say.  Therefore  the
automorphism group  $M$ is  a discrete  group, namely  $GL_d(\Z)$. The
conjugation  action of  $L$  on $M$  yields  a homomorphism  $L\ra
GL_d(\Z)$; but since $L$ is connected and $GL_d(\Z)$ is discrete, it
follows that this homomorphism is  trivial. That is, $L$ centralises
$M$.
\end{proof}

\begin{lemma} \label{liealgebragenerator}

[1]  The Lie  algebra  generated by  $\fg _\a,  \fg_{-\a}$  is all  of
$\fg$. 

[2] We have $[\fg _\a,\fg _\a]= \fg _{2\a}$.

\end{lemma}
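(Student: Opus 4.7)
The plan is to prove part [2] first, since it is used as the key input in the proof of part [1].

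For [2]: The inclusion $[\fg_\a, \fg_\a] \subseteq \fg_{2\a}$ is immediate from the $S$-weight decomposition, since bracketing two weight-$\a$ elements gives a weight-$2\a$ element. If the restricted root system is reduced (type $A_1$), then $\fg_{2\a} = 0$ and both sides vanish. In the non-reduced ($BC_1$) case the content is that the bracket actually fills $\fg_{2\a}$. I would invoke the structural fact from the root-group theory of isotropic semisimple groups in Borel--Tits \cite{BT}: the unipotent radical $U$ of the minimal parabolic is two-step nilpotent of ``Heisenberg type'', with $[U_\a, U_\a] = U_{2\a}$, which infinitesimally gives $[\fg_\a, \fg_\a] = \fg_{2\a}$.

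For [1]: Let $\fh$ denote the Lie subalgebra of $\fg$ generated by $\fg_\a$ and $\fg_{-\a}$. Applying part [2] symmetrically to both root spaces gives $\fg_{\pm 2\a} \subseteq \fh$, and hence $\fu^+ \oplus \fu^- \subseteq \fh$. Because $\fh$ is generated by $S$-weight vectors, it inherits the $S$-weight decomposition
\[ \fh = (\fh \cap \fl) \oplus \fg_{-2\a}\oplus \fg_{-\a}\oplus \fg_\a \oplus \fg_{2\a}. \]
I claim $\fh$ is an ideal of $\fg$. Each weight space $\fg_{j\a}$ with $j = \pm 1, \pm 2$ is $\fl$-stable and lies in $\fh$; the bracket of $\fg_{i\a}$ with $\fg_{j\a}$ for $i+j \neq 0$ lands in $\fg_{(i+j)\a}$, which is either zero or contained in $\fh$; for $i+j = 0$ the bracket lies in $[\fg_{i\a}, \fg_{-i\a}] \subseteq \fh \cap \fl$ by construction. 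It remains to verify $[\fl, \fh \cap \fl] \subseteq \fh \cap \fl$: the elements of $\fh \cap \fl$ are $\Q$-spans of commutators $[X, Y]$ with $X \in \fg_{i\a}$ and $Y \in \fg_{-i\a}$, and for $Z \in \fl$ the Jacobi identity yields $[Z, [X, Y]] = [[Z, X], Y] + [X, [Z, Y]] \in [\fg_{i\a}, \fg_{-i\a}]$ since $[Z, X] \in \fg_{i\a}$ and $[Z, Y] \in \fg_{-i\a}$.

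Since $G$ is $\Q$-simple and simply connected, $\fg$ has no proper nonzero $\Q$-rational ideal; as $\fh$ is a $\Q$-rational ideal containing the nonzero subspace $\fg_\a$, we conclude $\fh = \fg$. The main obstacle in this outline is part [2] in the $BC_1$ case, namely the surjectivity of the bracket $\fg_\a \wedge \fg_\a \to \fg_{2\a}$; this rests on the structural theorem of Borel--Tits identifying $[U_\a, U_\a]$ with $U_{2\a}$ for a rank-one isotropic simple group with non-reduced restricted root system, and without this input the remaining arguments become purely formal.
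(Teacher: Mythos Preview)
Your argument is correct, but the paper takes a different and more self-contained route by reversing the logical order. You prove [2] first by invoking the Borel--Tits structural fact $[U_\a,U_\a]=U_{2\a}$ as a black box, and then use it to get $\fg_{\pm 2\a}\subset\fh$ and conclude [1]. The paper instead proves [1] directly, \emph{without} knowing [2]: the key observation is that $\fg_{2\a}$ need not lie in the subalgebra $\fg''$ generated by $\fg_{\pm\a}$, it only needs to \emph{normalize} it, and this is immediate since $[\fg_{2\a},\fg_\a]=0$ and $[\fg_{2\a},\fg_{-\a}]\subset\fg_\a$ (and symmetrically for $\fg_{-2\a}$). Once [1] is in hand, the paper deduces [2] by a second ideal argument: the subspace $\fg^*=[\fg_\a,\fg_\a]\oplus\fg_\a\oplus\fl\oplus\fg_{-\a}\oplus[\fg_{-\a},\fg_{-\a}]$ is checked to be a subalgebra containing $\fg_{\pm\a}$, hence by [1] it is normalized by all of $\fg$, hence equals $\fg$ by $\Q$-simplicity, forcing $[\fg_\a,\fg_\a]=\fg_{2\a}$. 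So the paper gets both parts with no external input, whereas your approach outsources the $BC_1$ surjectivity to \cite{BT}. One small point in your write-up: the claim that $\fh\cap\fl$ is spanned by single commutators $[X,Y]$ with $X\in\fg_{i\a}$, $Y\in\fg_{-i\a}$ is true but not entirely obvious; you can bypass it by simply noting that $\fh$, being generated by $\fl$-stable subspaces, is automatically $\fl$-stable via Jacobi.
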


\begin{proof} We first prove that $\fg$  is generated as a Lie algebra
by the subspaces $\fg _\a, \fg _{2\a}, \fg_{-\a}, \fg _{-2\a}$. Denote
temporarily, the Lie algebra generated  by these subspaces as $\fg '$;
this is normalised by $\fl$, since  each of these spaces is. Hence all
of $\fg$ normalises $\fg'$.  The $\Q-$ simplicity of $\fg$ now ensures
that $\fg '= \fg$. \\

Under the Lie bracket, the subspace $\fg _{2\a}$ takes $\fg_{-\a}$into
$\fg  _\a$,  and $\fg  _\a$  into  $0$;  consequently, by  the  Jacobi
identity, the Lie algebra $\fg ''$  generated by $\fg _\a, \fg _{-\a}$
is normalised by the adjoint action  of $\fg _{2\a}$ (and similarly by
the action  of $\fg _{-2\a}$); hence  $\fg ''$ is normalised by $\fg
_\b$ with  $\b\in \{\pm \a, \pm  2 \a\}$; by the  preceding paragraph,
$\fg  ''$ is  normalised by  $\fg$; the  $\Q$-simplicity of  $\fg$ now
implies that $\fg ''=\fg$. This proves [1]. \\

Denote by $\fg ^*$ the direct sum subspace
\[\fg  ^*= [\fg  _\a,\fg _\a]\oplus  \fg  _\a \oplus  \fl \oplus  \fg
_{-\a}\oplus [\fg _{-\a},\fg _{-\a}] . \]

By examining  the brackets  of each  of the  individual terms  in this
direct sum,  we see  that $\fg  ^*$ is a  Lie subalgebra  defined over
$\Q$. Further  it is normalised  by the subspaces $\fg  _\a, \fg_{-a}$
(since  it  contains  them);  by  part [1],  $\fg  ^*$  is  therefore
normalised by all of $\fg$.  The $\Q$-simplicity of $\fg$ then implies
that  $\fg ^*=\fg$.  This proves  [2]: 
$[\fg _\a, \fg _\a]=\fg _{2\a}$.

\end{proof}
  
\begin{lemma}  If $t\in  M$ is  not  in the  centre of  $G$, then  the
element $t$  does not have a  non-zero fixed point in  $\fg _\a$. That
is, all the  eigenvalues of the linear transformation  $Ad (t)$ acting
on $\fg _\a$ are different from $1$. 
\end{lemma}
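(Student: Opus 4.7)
The plan is to prove the contrapositive: assuming that $Ad(t)$ has eigenvalue $1$ on $\fg_\a$, I derive $t\in Z(G)$. By the preceding lemma, $M\subset Z(L)$, so $Ad(t)$ commutes with the adjoint action of $L$ on $\fg$. In particular the fixed subspace $V_\a:=\ker(Ad(t)-1)\cap\fg_\a$ is an $L$-submodule of $\fg_\a$, and $Ad(t)|_{\fg_\a}$ lies in the commutant $D:=\mathrm{End}_L(\fg_\a)$.

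The essential structural input is that $\fg_\a$ is irreducible as a $\Q$-algebraic $L$-module. Writing the $\Q$-simple group $G$ as $R_{K/\Q}(G_0)$ for an absolutely simple $G_0$ over a number field $K$, the $\Q$-parabolic $P=LU$ corresponds to a maximal $K$-parabolic of $G_0$ with Levi $L_0$, and the standard theory of maximal parabolics shows that $\fg_{0,\a}$ is absolutely $L_0$-irreducible. Hence $\fg_\a\otimes\bar\Q$ decomposes as a single $\mathrm{Gal}(\bar\Q/\Q)$-orbit of absolutely irreducible $L_{\bar\Q}$-modules, so $\fg_\a$ is $\Q$-irreducible. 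Schur's lemma now gives that $D$ is a finite-dimensional division algebra over $\Q$. If $V_\a\neq 0$, then $Ad(t)|_{\fg_\a}-1\in D$ has non-zero kernel, and so is not invertible in $\mathrm{End}_\Q(\fg_\a)$; but a non-zero element of a division algebra is invertible, forcing $Ad(t)|_{\fg_\a}-1=0$, i.e.\ $Ad(t)$ acts as the identity on $\fg_\a$.

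Because $Ad(t)$ preserves the Killing form of $\fg$ and the Killing form pairs $\fg_\a$ with $\fg_{-\a}$ non-degenerately, $Ad(t)$ must then also be the identity on $\fg_{-\a}$. By Lemma~\ref{liealgebragenerator}\,[1], $\fg_\a\cup\fg_{-\a}$ generates $\fg$ as a Lie algebra, so $Ad(t)$ is the identity on all of $\fg$; since $G$ is connected simply connected semisimple, $\ker Ad=Z(G)$, and we conclude $t\in Z(G)$, contradicting the hypothesis. The main delicate point in this plan is the $\Q$-irreducibility of $\fg_\a$ as an $L$-module; this is the step whose justification one must be careful about, and the Galois-orbit description above is the cleanest way to extract it from the structure theory. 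An equivalent, more explicit substitute for Schur's lemma is to observe that if $\beta$ is an absolute root with $\beta|_S=\a$ and $\beta(t)=1$, then $\sigma(\beta)(t)=\sigma(\beta(t))=1$ for every $\sigma\in\mathrm{Gal}(\bar\Q/\Q)$ (since $t\in M(\Q)$), so $Ad(t)$ acts as $1$ on every root space $\fg_{\sigma(\beta)}$, and transitivity of the Galois action on $\{\beta:\beta|_S=\a\}$ forces $Ad(t)|_{\fg_\a\otimes\bar\Q}=\mathrm{id}$.
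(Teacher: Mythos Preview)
Your approach differs substantially from the paper's. The paper never invokes irreducibility of $\fg_\a$; instead it argues directly. Writing $W\subset\fg_\a$ for the sum of the $Ad(t)$-eigenspaces with eigenvalue $\neq 1$, one checks (using $M\subset Z(L)$ and $[\fg_\a,\fg_{-\a}]\subset\fl$) that $[W,(\fg_{-\a})^{t}]=0$. If both $W(\Q)$ and $(\fg_{-\a})^{t}(\Q)$ were nonzero, exponentiating would give nontrivial commuting unipotents $u\in U^+$ and $v\in U^-$; but in $\Q$-rank one a nontrivial unipotent element lies in a \emph{unique} maximal unipotent subgroup, so the group $\langle u,v\rangle$ would lie in $U^+\cap U^-=\{1\}$, a contradiction. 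Combined with Killing-form duality this yields the dichotomy: either $Ad(t)$ is the identity on $\fg_\a$ (whence $t$ is central, as in your last step) or $Ad(t)$ has no fixed vector there. The argument is elementary and uses only the rank-one geometry.

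Your route via Schur's lemma would work, but both justifications you offer for the $\Q$-irreducibility of $\fg_\a$ are incorrect. The claim that $\fg_{0,\a}$ is \emph{absolutely} $L_0$-irreducible fails already for $G_0=SU(2,1)$: its minimal $K$-parabolic becomes a Borel of $SL_3$ over $\bar K$, so $L_{0,\bar K}$ is the diagonal torus and $\fg_{0,\a}\otimes\bar K=\fg_{\a_1}\oplus\fg_{\a_2}$ splits into two lines. (A maximal $K$-parabolic need not remain maximal over $\bar K$.) Your alternative --- transitivity of $\mathrm{Gal}(\bar\Q/\Q)$ on $\{\beta:\beta|_S=\a\}$ --- fails for $G=SL_2(D)$ with $D/\Q$ a quaternion division algebra: this is an inner form, so Galois acts trivially on the absolute root system of $SL_4$, yet four absolute roots restrict to $\a$. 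What \emph{is} true, and would repair your argument, is that the $L_{\bar\Q}$-irreducible constituents of $\fg_\a\otimes\bar\Q$ have as lowest weights precisely the distinguished absolute simple roots in the $*$-orbit corresponding to $\a$, and Galois permutes that orbit transitively by the very definition of the Tits index; this combined Weyl/Galois transitivity gives $\Q$-irreducibility. With that in hand your Schur argument is valid (for $t\in M(\Q)$, which is all that is used downstream), but it invokes more structure theory than the paper's self-contained unipotent argument.
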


\begin{proof}:  Since  $M$  is  a  torus,  every  element  of  $M$  is
semi-simple.  Split the space $\fg  _\a$ into eigenspaces for $t$: $\fg
_\a =\oplus _{\l \neq 1} (\fg _\a ) _\l\oplus (\fg _\a)_1=W\oplus (\fg
_\a)_1$.  Similarly  there is  a decomposition  for $\fg  _{-\a}$.  If
possible, let  $X\in (\fg _\a)_\l$  and $Y\in (\fg _{-\a})_1$  be both
nonzero. We will show that this leads to a contradiction.  The bracket
$[X,Y]$ is  in $\fg  ^S \simeq  \fl $.  This  is impossible  since the
action  of  $t$ on  $X$  is  multiplication by  $\l$  and  on $Y$  and
$\mathfrak l$ it  is multiplication by $1$.  Therefore, $[X,Y]=0$, and
hence, by taking the sum over all  the $\l$, $[W, Y]=0$. Note that $W$
is defined over $\Q$.\\

Now take $X\in W(\Q)\setminus \{0\}$;  then $u=exp (X)$ is a unipotent
element   commuting  with   the  unipotent   element  $v=exp   (Y)\in
U^-$.  Hence $u,v$  generate a  unipotent group  $E$. However,  in the
$\Q$-rank one group $G$, every nontrivial unipotent element belongs to
a unique maximal unipotent group; hence $u\in U^+$ and hence $E\subset
U^+$; similarly, $v \in U^-$ and hence $E\subset U^-$; this means that
$E=\{1\}$ i.e. $X=0$ and $Y=0$, a contradiction. \\

This means  that either $X=0$ or  $Y=0$; in other words,  $\fg _\a$ is
either fixed  point-wise by $t$,  or $\fg  _\a=W$ has no  fixed points.
Suppose $\fg _\a =\fg _\a ^t$. By considering orthogonal decomposition
(with respect  to the Killing  form $\k$ ) of  $\fg$ as a  module over
$t\in  SO(\k) $,  one sees  that $\fg  _{-\a}=\fg _{-\a  }^t$ is  also
point-wise fixed  by $t$.  Since $\fg  $ is  generated by  $\fg _\a,\fg
_{-\a}$  by lemma  \ref{liealgebragenerator}, it  follows that  all of
$\fg$ is point-wise fixed by $t$; that is, $t$ is in the centre of $G$.
\end{proof}

\begin{corollary} Let  $V=\{exp(zX);z\in {\mathbb G}_a\subset  U^+\}$ be
the  subgroup  generated  by  the exponentials  of  an  element  $X\in
\fg_\a$. Suppose $t\in M $ is not in the centre of $G$. The commutator
subgroup $[t^\Z,V]$ contains $V$.

\end{corollary}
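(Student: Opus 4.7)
The plan is to compute the commutator $[t^n, \exp(zX)]$ directly and exploit the fact that, by the preceding lemma, $Ad(t)$ has no nonzero fixed vector on $\fg_\a$; since $t \in M$ is semisimple (as $M$ is a torus), $Ad(t)$ is diagonalisable on $\fg_\a$ with every eigenvalue different from $1$. Since $t \in L$ normalizes $U^+$, one has
$$[t^n, \exp(zX)] = \exp(z\, Ad(t^n) X) \exp(-zX).$$

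The main case I would handle first is when $X$ is an eigenvector of $Ad(t)$ with eigenvalue $\l \neq 1$. Then both factors $\exp(\l^n zX)$ and $\exp(-zX)$ lie in the abelian one-parameter subgroup $V$, so the commutator simplifies to $\exp((\l^n - 1)zX) \in V$. Taking $n=1$, the map $z \mapsto (\l-1)z$ is a bijection of $\mathbb{G}_a$, giving $[t, V] = V$ and hence $V \subset [t^\Z, V]$.

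For general $X \in \fg_\a$ (not assumed to be an eigenvector) the commutator need not lie in $V$, and the argument will be more delicate. The plan is to exploit that $U^+$ is nilpotent of class two with $[U^+, U^+] = \exp(\fg_{2\a})$ central: Baker--Campbell--Hausdorff then gives $[t^n, \exp(zX)] = \exp(z(Ad(t^n) - I) X)\cdot c_n(z)$ for some $c_n(z) \in \exp(\fg_{2\a})$. Modulo this centre, the image of $[t^\Z, V]$ in $\fg_\a$ equals the span of $\{(Ad(t^n) - I)X : n \in \Z\}$; the invertibility of $Ad(t) - I$ on $\fg_\a$, together with the $t$-invariance of this span, forces it to coincide with the full $t$-cyclic subspace through $X$, which contains $X$ itself. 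The main obstacle will be lifting this back from the abelianisation to $V$: one must cancel the central corrections $c_n(z)$ inside $[t^\Z, V]$, which should be achieved using commutators of commutators, all of which lie in $\exp([\fg_\a, \fg_\a]) = \exp(\fg_{2\a})$ by Lemma \ref{liealgebragenerator}[2]. The book-keeping of these BCH corrections in the $\fg_{2\a}$-direction is the only genuinely non-trivial step; the eigenvector case (which is the one actually used in the sequel) is a one-line calculation.
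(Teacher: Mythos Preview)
Your eigenvector case is correct, but your claim that ``the eigenvector case is the one actually used in the sequel'' is wrong, and this is the real gap. In the proof of Theorem~\ref{rankonemaintheorem} the element $X\in\fg_\a(\Q)$ is chosen so that its projection to every root space of a maximal torus containing $MS$ is nonzero; for a typical $t\in M\setminus Z(G)$ such an $X$ is \emph{not} an eigenvector of $Ad(t)$. The corollary is then applied (via Corollary~\ref{SL2commutator}) to this generic $X$, so the general case is exactly what is needed. Your sketch for general $X$ stops at the abelianisation $U^+/\exp(\fg_{2\a})$ and defers the lifting to unspecified ``book-keeping of BCH corrections''; that step is not carried out, so the argument is incomplete.

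The paper sidesteps the lifting problem entirely by working at the Lie-algebra level rather than with BCH. Each curve $z\mapsto [t^k,\exp(zX)]$ lies in $[t^{\Z},V]$ and has tangent $(Ad(t^k)-1)X$ at the identity; since $[t^{\Z},V]$ is a connected algebraic subgroup of $U^+$ (being generated by images of $\mathbb{G}_a$), these tangent vectors lie in its Lie algebra. The linear-algebra fact you already proved---that $X$ lies in the span of $\{(Ad(t^k)-1)X:k\in\Z\}$---then gives $X\in\mathrm{Lie}([t^{\Z},V])$, hence $V\subset[t^{\Z},V]$. (The paper derives the same span statement via Cayley--Hamilton and the group-ring identity $(a-1)(b-1)=(ab-1)-(a-1)-(b-1)$, which is equivalent to your $t$-invariance argument.) So your modulo-centre computation is on the right track, but the clean way to finish is to reinterpret it as a statement about $\mathrm{Lie}([t^{\Z},V])$ rather than trying to cancel central corrections by hand.
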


\begin{proof} The  Cayley -Hamilton theorem  and the fact that  $1$ is
not an eigenvalue  of $Ad (t)$ (lemma) show that  the matrix $Ad(t)-1$
is invertible  on $\fg  _\a$ and  the inverse is  a polynomial  in $Ad
(t)-1$.  Hence  $X$ is in the  span of the vectors  $(Ad (t) -1)^k(X)$
.\\

If $\mathcal G$  is a group, $R=\Z [{\mathcal G}]$  its integral group
ring and  $a,b\in \mathcal G$, then  $a-1, b-1, ab-1$ are  elements of
$R$. We have the identity
\[(a-1)(b-1) =(ab-1)  - (a-1)-(b-1) ,  \] in  $R$; this shows  that the
span of $Ad(t^k)-1$ contains the span of $(Ad t-1)^k$ for all
integers $k \geq 1$. This and the preceding paragraph show that $X$ is
a linear combination of $(Ad (t^k)-1)(X)$. \\

Since the Lie algebra of  the commutator group $[t^\Z,V]$ contains the
span  of $(Ad  (t^k)-1)(X)$  for $  k\geq 1$,  the  conclusion of  the
preceding paragraph shows that $X$ lies  in this Lie algebra; that is,
$V$ lies in $[t^\Z,V]$.
\end{proof}

\begin{notation} Let $\theta: H= SL_2 \ra G$ be a non-trivial morphism
of algebraic $\Q$-groups.  Denote by  $U_H^+$ (resp. $U^-_H$) the group
of upper  triangular (resp.   lower triangular) unipotent  matrices in
$SL_2$.   Assume $\theta  $  is such  that $\theta  (U^{\pm}_H)\subset
U^{\pm}$,  and that  the  image under  $\theta$ of  the  group $D$  of
diagonals in $H$ is the torus $S$ in $G$. 
\end{notation}

\begin{corollary}  \label{SL2commutator}  If  $\D  \subset  M$  is  an
infinite  subgroup, then  the  commutator  group $[\D,\theta  (SL_2)]$
contains $\theta (SL_2)$:
\[[\D,\theta (SL_2 )]\supset \theta(SL_2). \]

\end{corollary}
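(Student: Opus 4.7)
The plan is to extract a single well-behaved element $t\in \Delta$, run the preceding corollary on $t$ and on each of the two one-parameter subgroups $\theta(U_H^\pm)$, and then exploit the fact that $SL_2$ is generated by $U_H^+\cup U_H^-$.

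First I would observe that since $G$ is semi-simple and simply connected, its centre $Z(G)$ is finite. As $\Delta\subset M$ is infinite, there must exist some $t\in \Delta$ that does not lie in $Z(G)$. Next, set $X^\pm=d\theta(e^\pm)$, where $e^\pm\in \mathfrak{sl}_2$ are the upper/lower triangular nilpotent generators; then
\[ V^\pm\;:=\;\theta(U_H^\pm)\;=\;\{\exp(zX^\pm):z\in \mathbb{G}_a\}, \]
and I would arrange (by the choice of $\theta$, equivalently by selecting the simple root $\alpha$ appropriately) that $X^\pm\in \fg_{\pm\a}$, so that the hypothesis of the preceding corollary is met. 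By that corollary applied to $t$ and $V^+$, and then to $t$ and $V^-$, one gets
\[ V^\pm\;\subset\;[t^{\Z},V^\pm]\;\subset\;[\D,\theta(SL_2)], \]
the second inclusion being immediate from $t\in \D$ and $V^\pm\subset \theta(SL_2)$.

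To finish, I would use that $[\D,\theta(SL_2)]$ is, by definition, a subgroup of $G$, and that it now contains both $V^+$ and $V^-$. Since $SL_2$ is generated as an (abstract, hence algebraic) group by its upper and lower unipotent subgroups, the image $\theta(SL_2)$ is generated by $V^+$ and $V^-$. Therefore
\[ \theta(SL_2)\;=\;\langle V^+,V^-\rangle\;\subset\;[\D,\theta(SL_2)], \]
which is the claimed inclusion.

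The one technical point that needs attention is the hidden assumption that $X^\pm\in \fg_{\pm \a}$ (and not $\fg_{\pm 2\a}$), since the preceding corollary is stated only for the short-root weight space. In the situations where the ambient $SL_2$ in later sections is produced by Jacobson--Morozov applied to a nilpotent element of $\fg_\a$, this is automatic; in principle, one would otherwise have to verify a version of the preceding lemma for $\fg_{2\a}$, by decomposing $X\in \fg_{2\a}$ into $Ad(t)$-eigenspaces, using $[\fg_\a,\fg_\a]=\fg_{2\a}$ and the already-proved absence of $Ad(t)$-fixed vectors in $\fg_\a$ together with the $\Q$-rank one rigidity argument (two commuting non-trivial unipotents cannot lie, one in $U^+$ and one in $U^-$). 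Once that absence-of-fixed-vectors property is in hand for the relevant weight space containing $X^\pm$, the Cayley--Hamilton argument of the preceding corollary goes through verbatim, and the rest of the proof is as above.
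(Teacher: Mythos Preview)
Your argument is correct and follows essentially the same route as the paper: pick a non-central $t\in\Delta$, apply the preceding corollary to each of $V^\pm=\theta(U_H^\pm)$, and conclude from $SL_2=\langle U_H^+,U_H^-\rangle$. The paper's proof is terser---it writes $[\Delta,\theta(U_H^\pm)]\supset\theta(U_H^\pm)$ directly without isolating a single $t$---but the content is the same. Your explicit remark that the hypothesis $X^\pm\in\fg_{\pm\a}$ is guaranteed by the Jacobson--Morozov construction used downstream is a point the paper leaves implicit, so your added paragraph on this is a genuine clarification rather than a deviation.
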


\begin{proof}  By the  preceding corollary  , the  commutator subgroup
$[\D,  \theta (U_H^+)]$  contains $\theta  (U_H^+)$; similarly,  $[\D,
\theta (U_H^-)]$ contains $\theta  (U_H^-)$. Since $SL_2$ is generated
by $U_H^{\pm}$ the corollary follows.
\end{proof}

\subsection{Some consequences of Proposition \ref{dirichlet}} 

In  this  subsection,  we  derive  some  consequences  of  Proposition
\ref{dirichlet} for $\theta  (SL_2)$ and the results  of the preceding
subsection. We consider  a nontrivial morphism $\theta: SL_2  \ra G$ f
$\Q$-algebraic groups  as before. By lemma  \ref{Dlemma}, there exists
an integer $k\geq  1$ such that $\theta  (SL_2(km))\subset G(m\Z)$ for
every  integer  $m\geq  1$.  It   follows  that,  for  every  $D\equiv
0~(mod~m)$ and every $m$, we have $\theta(SL_2(Dm)) \subset G(m\Z)$.

\begin{corollary}  \label{SL2isotropy} If  $\theta: SL_2  \ra G$  is a
nontrivial  morphism, then  there exist  integers $D,e$  such that  if
$g=\begin{pmatrix} a &  b \\ c & d \end{pmatrix}  \in SL_2(D\Z)$, then
the conjugate  $t\theta (g) t^{-1} \in  \theta (g) F(m\Z)$ for  all $t
\in  M(D(Da^N)^e\Z)$.  That  is,  $M(D(Da^N)^e\Z)$  acts trivially  on
$\theta (g)$ viewed as an element of $\Gamma (m)/F(m)$.

\end{corollary}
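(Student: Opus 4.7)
The plan is to imitate the analysis in Section~3 that culminated in equation~(\ref{conjugationbyM}), applied directly to the element $\theta(g)$ in place of a general $x \in \G(m')$. The main task is to control the denominator appearing in the $U^-P$-decomposition of $\theta(g)$; the key observation is that this denominator is bounded by $Da^N$ for some $N$ depending only on the degrees of the polynomial morphism $\theta$, and not on the specific $g$.

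First I would invoke Lemma~\ref{Dlemma} to fix an integer $D_0 \geq 1$ with $\theta(SL_2(D_0m\Z)) \subset G(m\Z) = \G(m)$; enlarging $D$ to be a multiple of $D_0$, I may assume $\theta(g) \in \G(m)$. Since $g \in SL_2(D\Z)$ with $D \geq 2$ forces $a \equiv 1 \pmod{D}$, the entry $a$ is nonzero and $g$ admits the Bruhat factorisation
\[ g = \begin{pmatrix} 1 & 0 \\ c/a & 1 \end{pmatrix} \begin{pmatrix} a & 0 \\ 0 & 1/a \end{pmatrix} \begin{pmatrix} 1 & b/a \\ 0 & 1 \end{pmatrix} = u_H^- s_H u_H^+ \]
inside $SL_2(\Q)$. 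Applying $\theta$ I obtain a $U^-P$-decomposition $\theta(g) = u^- p$ with $u^- = \theta(u_H^-) \in U^-(\Q)$ and $p = \theta(s_H u_H^+) \in P(\Q)$.

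Next I would apply the denominator estimate~(\ref{denominatortaylor})--(\ref{denominatorbound}) to the polynomial morphism $\theta$ evaluated at the entries $b/a,\, c/a,\, a,\, 1/a$, all of which lie in $\tfrac{1}{a^k}\Z$ for a small $k$. This yields a common denominator bound $Da^N$ for the matrix entries of $u^-$, $p$, and $p^{-1}$, where $N$ depends only on the total degree of $\theta$. Combined with the congruence $\theta(g) \equiv 1 \pmod{m}$ arising from the first step, it follows that
\[ u^- \in U^- \cap \bigl(1 + \tfrac{m}{Da^N} M_B(\Z)\bigr), \qquad p,\, p^{-1} \in P \cap \bigl(1 + \tfrac{m}{Da^N} M_B(\Z)\bigr). \]
Now Lemma~\ref{commutatorlemma} (with the integer $Da^N$ playing the role of $a$ in its statement) produces an integer $e \geq 1$, depending only on the embedding $G \subset SL_B$, such that both commutators $[M((Da^N)^e \Z),\, u^-]$ and $[M((Da^N)^e \Z),\, p]$ lie in $F(m)$.

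Finally I would reproduce the commutator identity already used to obtain~(\ref{conjugationbyM}): for $t \in M(D(Da^N)^e \Z) \subset M((Da^N)^e\Z)$,
\[ t\theta(g)t^{-1} = [t, u^-]\, \theta(g)\, [p^{-1}, t] \in F(m)\, \theta(g)\, F(m) = \theta(g) F(m), \]
the last equality by normality of $F(m)$. This is precisely the conclusion. The main obstacle is ensuring that the exponent $N$ is uniform in $g$, and that the divisibility of numerators by $m$ survives the Bruhat factorisation; both are resolved by the interplay of Lemma~\ref{Dlemma} with the estimate~(\ref{denominatorbound}), so the argument is essentially a bookkeeping exercise adapting the general $U^-P$ analysis of Section~3 to the image of an $SL_2$.
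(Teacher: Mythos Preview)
Your approach is essentially the paper's, and the overall structure (Bruhat factorisation in $SL_2$, push through $\theta$, bound denominators via (\ref{denominatorbound}), then invoke Lemma~\ref{commutatorlemma} and the identity behind (\ref{conjugationbyM})) is correct.

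One step should be sharpened. You obtain the inclusions $u^-,\,p,\,p^{-1}\in 1+\tfrac{m}{Da^N}M_B(\Z)$ by combining the denominator bound with the congruence $\theta(g)\equiv 1\pmod m$. But the product congruence $u^-p\equiv 1\pmod m$ does not by itself force each factor into $1+\tfrac{m}{Da^N}M_B(\Z)$; you would need the local $U^-P$ uniqueness argument from Section~3.1, which brings in an auxiliary level $m'$ and complicates the uniformity of $D$. The paper bypasses this by tracking the $m$-divisibility at the $SL_2$ level \emph{before} applying $\theta$: since $g\in SL_2(km\Z)$ one has $b,c\in km\Z$, so already $u_H^-,\,u_H^+\in 1+\tfrac{km}{a}M_2(\Z)$ (and $s_H\in 1+\tfrac{km}{a}M_2(\Z)$ as well, since $a\equiv 1\pmod{km}$). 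The Taylor-type estimate (\ref{integraltaylor})/(\ref{denominatorbound}) then carries a factor of $m$ through $\theta$ directly, yielding $\theta(u_H^-)\in 1+\tfrac{m}{Da^N}M_B(\Z)$ and similarly for $p$, with $D,N$ depending only on $\theta$. With this adjustment your argument coincides with the paper's.
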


\begin{proof} Write
\[g= \begin{pmatrix} a & b \\ c & d \end{pmatrix}= \begin{pmatrix} 1 &
0  \\ \frac{c}{a}  & 1  \end{pmatrix}  \begin{pmatrix} a  & 0  \\ 0  &
a^{-1}  \end{pmatrix}   \begin{pmatrix}  1   &  \frac{b}{a}  \\   0  &
1\end{pmatrix} =  u^- z v\]  with $u^- \in  U_H^-$, $z$ is  a diagonal
matrix, and $v  \in U_H^+$.  Thus $u^-,  v $ are matrices  of the form
$1+\frac{km}{a}x$     with     $x     \in    M_2$.     By     equation
(\ref{denominatorbound}) there exist integers  $D,N$ such that $\theta
(u^-)\in 1_B+ \frac{m}{Da^N}M_B(\Z)$; we  may assume (by replacing $D$
by $Dk$ if necessary: cf. the last sentence of the paragraph preceding
the  corollary)) ,  that  $D$ is  divisible by  $k$.   Suppose $t  \in
M(D(Da^N)^e\Z)$; then by equation (\ref{conjugationbyM}),

\[  t\theta  (g) t^{-1} \in \theta (g) F(m),\]  preserves the
coset of $F(m)$ in $\Gamma (m)$ through the element $\theta (g)$.

\end{proof}

\begin{corollary}  \label{rankoneisotropy}  The  isotropy  of  $\theta
(g)\in  \G(m)/F(m)$  with  $g  \in SL_2(Dm\Z)$  as  in  the  preceding
corollary, contains  the group generated by the congruence groups 
$M(D (D(a+bx)^N)^e\Z)$, with $x \in Dm \Z$.
\end{corollary}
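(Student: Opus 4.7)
The plan is to derive this corollary from Corollary \ref{SL2isotropy} by exploiting the fact that right-multiplying $g = \begin{pmatrix} a & b \\ c & d \end{pmatrix}$ by the elementary unipotent $u_x = \begin{pmatrix} 1 & 0 \\ x & 1 \end{pmatrix}$ produces a matrix whose $(1,1)$-entry equals $a+bx$. This is precisely the substitution that converts the isotropy $M(D(Da^N)^e\Z)$ from the previous corollary into the subgroups $M(D(D(a+bx)^N)^e\Z)$ that appear in the statement.

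First, for each $x \in Dm\Z$, the matrix $u_x$ lies in $SL_2(Dm\Z)$, so $g u_x$ lies in $SL_2(Dm\Z) \subset SL_2(D\Z)$, and a direct calculation shows its $(1,1)$-entry is $a+bx$. Applying Corollary \ref{SL2isotropy} to $gu_x$ in place of $g$ yields, for every $t \in M(D(D(a+bx)^N)^e\Z)$, the inclusion $t \theta(gu_x) t^{-1} \in \theta(gu_x) F(m)$; equivalently, $t$ fixes the coset $\theta(gu_x) F(m)$ in $\G(m)/F(m)$.

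Next, I would invoke Lemma \ref{Dlemma} (with $D$ chosen divisible by the constant $k$ of that lemma, as was already arranged in the preceding corollary) to deduce $\theta(u_x) \in \theta(SL_2(Dm\Z)) \subset G(m\Z) \subset F(m)$. Hence $\theta(gu_x) = \theta(g) \theta(u_x)$ lies in the same coset of $F(m)$ as $\theta(g)$, so $\theta(gu_x) F(m) = \theta(g) F(m)$. Combined with the previous step, this shows that every element of $M(D(D(a+bx)^N)^e\Z)$ fixes the coset $\theta(g) F(m)$. Since the stabiliser of a coset under the conjugation action is a subgroup, the group generated by all these congruence subgroups as $x$ ranges over $Dm\Z$ is contained in the isotropy of $\theta(g) F(m)$, which is the stated inclusion.

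There is no substantive obstacle: the argument is essentially a bookkeeping application of the preceding corollary, coupled with the elementary observation that the operation $g \mapsto g u_x$ realises the shift $a \mapsto a+bx$ in the $(1,1)$-entry.
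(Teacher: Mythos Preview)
Your approach is essentially identical to the paper's: right-multiply $g$ by $u_x = \begin{pmatrix} 1 & 0 \\ x & 1 \end{pmatrix}$ to shift the $(1,1)$-entry from $a$ to $a+bx$, apply Corollary \ref{SL2isotropy} to $gu_x$, and then observe that $\theta(gu_x)$ and $\theta(g)$ represent the same coset of $F(m)$.

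There is one slip, however: the inclusion $G(m\Z) \subset F(m)$ that you invoke is false --- indeed, the entire discussion revolves around the nontrivial quotient $\G(m)/F(m)$. What you actually need is $\theta(u_x) \in U^-(m\Z) \subset F(m)$. Since $u_x \in U_H^-$ and $\theta(U_H^-) \subset U^-$, you have $\theta(u_x) \in U^-$; combined with $\theta(u_x) \in G(m\Z)$ (from Lemma \ref{Dlemma}, as you note), this gives $\theta(u_x) \in U^- \cap G(m\Z) = U^-(m\Z) \subset F(m)$. The paper expresses this as $\theta(g') \in \theta(g)E(m)$. With this correction, your argument agrees with the paper's.
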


\begin{proof}  The  preceding  Corollary \ref{SL2isotropy}  says  that
$M(D(Da^N)^e)$  stabilises  the element  $\theta  (g)  \in \G(m)/F(m)$.  If
$g=\begin{pmatrix} a &  b \\ c & d \end{pmatrix}  \in SL_2(Dm)$ and $x
\in  Dm$, we  may apply  the corollary  to the  element $g'  =gu$ with
$u=\begin{pmatrix} 1 &  0 \\ x & 1 \end{pmatrix}$;  then $a(gu) =a+bx$
and  $\theta (g')  \in \theta  (g)E(m)$. Now  the preceding  Corollary
\ref{SL2isotropy}  says that  $M(D(D(a+bx)^N)^e)$  also stabilises  $\theta
(g)=\theta (g')$ in $\G(m)/F(m)$. This proves the corollary.
\end{proof}

\begin{corollary}  \label{dirichlet1} In  the notation  of Proposition
\ref{dirichlet}, denote by  $\D$ the normal subgroup  generated by the
congruence  subgroups $M(D(D(ax+b)^N)^e)$  for  some  {\bf fixed}  integers
$D,N,e$.  The exponent of $M(\Z)/\D$ depends only on the integers $D,N,e$
and not on $a,b$).

\end{corollary}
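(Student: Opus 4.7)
The plan is to reduce Corollary~\ref{dirichlet1} to Proposition~\ref{dirichlet} by absorbing the fixed integer $T := D^{e+1}$ into a finite-index congruence subgroup of $M(\Z)$. Write $N' := Ne$, so that the generators of $\Delta$ are the congruence subgroups $M(T(ax+b)^{N'})$. The obstruction to a direct appeal to Proposition~\ref{dirichlet} is that $T(ax+b)^{N'}$ is not itself an $N'$-th power $(a'z+b')^{N'}$ of a coprime linear progression; the remedy is to pass to a sub-progression on which $T$ factors out cleanly.

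Since $\gcd(a,b)=1$, we may choose $x_0 \in \Z$ with $\gcd(ax_0 + b,\,T) = 1$: for each prime $p \mid T$, either $p \mid a$, in which case $p \nmid b$ and so $p \nmid ax+b$ automatically, or $p \nmid a$, in which case one just avoids the single residue class $x \equiv -a^{-1}b \pmod{p}$; combine by the Chinese remainder theorem. Setting $b' := ax_0 + b$, we have $\gcd(b',\,aT) = 1$, and for every $z \in \Z$ the value $a(x_0 + Tz) + b = b' + aTz$ remains coprime to $T$. Consequently the ideals $T\Z$ and $(b'+aTz)^{N'}\Z$ are comaximal, whence
\[
M\bigl(T(b'+aTz)^{N'}\bigr) \;=\; M(T)\cap GL_n\bigl((b'+aTz)^{N'}\Z\bigr).
\]

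Now I would apply Theorem~\ref{dirichletlinear} to the subgroup $M(T) \subset GL_n(\Z)$ with the coprime pair $(aT,\,b')$ and exponent $N'$: the normal subgroup $\Delta_0$ of $M(T)$ generated by $\{M(T)\cap GL_n((b'+aTz)^{N'}\Z) : z \in \Z\}$ satisfies that $M(T)/\Delta_0$ has exponent dividing $g(n)$. By the identity displayed above, each generator of $\Delta_0$ is one of the $M(T(a\tilde x + b)^{N'})$ with $\tilde x = x_0 + Tz$, hence lies in $\Delta$; thus $\Delta_0 \subset M(T)\cap\Delta$ and the exponent of $M(T)/(M(T)\cap\Delta)$ also divides $g(n)$.

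To finish, $[M(\Z):M(T)] \leq |GL_n(\Z/T\Z)| \leq T^{n^2}$, so every element of $M(\Z)$ has a power of exponent at most $T^{n^2}$ lying in $M(T)$; combined with the preceding paragraph, every element of $M(\Z)$ has a power of exponent at most $g(n)\cdot T^{n^2}$ lying in $\Delta$. This bound depends only on $n$, $D$, and $e$ (with no dependence on $a$, $b$, or even $N$), as required. The only point beyond Theorem~\ref{dirichletlinear} is the shift $x \mapsto x_0 + Tz$ producing an honest coprime pair $(aT,\,b')$ across the sub-progression; without this step the superfluous factor $T$ would block a direct application of the theorem.
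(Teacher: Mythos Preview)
Your argument is correct and is essentially the approach the paper has in mind: reduce to Theorem~\ref{dirichletlinear}/Proposition~\ref{dirichlet} after absorbing the fixed factor $T=D^{e+1}$. The paper's own proof is a one-liner (``immediate from Corollary~\ref{rankoneisotropy} and Proposition~\ref{dirichlet}''), leaving implicit exactly the two steps you spell out --- shifting to a sub-progression $x=x_0+Tz$ on which $ax+b$ is coprime to $T$, and then applying the theorem to the finite-index subgroup $M(T)\subset GL_n(\Z)$ so that the levels factor as $M(T)\cap GL_n((b'+aTz)^{N'}\Z)$. Your observation that the resulting bound is actually independent of $N$ (since $g(n)$ in Theorem~\ref{dirichletlinear} does not depend on the exponent) is a nice sharpening that the paper does not make explicit.

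One small phrasing point: when you write ``every element of $M(\Z)$ has a power of exponent at most $T^{n^2}$ lying in $M(T)$'', what you actually use (and what is true) is that the \emph{fixed} integer $k=[M(\Z):M(T)]\le T^{n^2}$ satisfies $m^k\in M(T)$ for all $m$, so that $R=k\cdot g(n)$ is a uniform exponent for $M(\Z)/\Delta$. As written it could be read as an element-dependent exponent, which would not immediately give a bound on the exponent of the quotient group; you might tighten the wording.
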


\begin{proof} This  is immediate from  Corollary \ref{rankoneisotropy}
and Proposition \ref{dirichlet}.
\end{proof}

\begin{notation}  We set  up some  notation for  another corollary  to
Proposition \ref{dirichlet}. Let  $\theta: H=SL_2 \ra G$  be as before
with $\theta  (U_H^{\pm})\subset U^{\pm}$.  Denote by $C$  and $C_H$
the congruence subgroup kernels of  $G$ and $H$ respectively. Then $C$
is the inverse limit
\[C  =\varprojlim \widehat{\G(m)/E(m)}\]  where $\G(m)=G(m\Z)$  is the
principal  congruence  subgroup of  level  $m$,  $E(m)$ is  the  normal
subgroup in $G(\Z)$  generated by $U^{\pm}(m\Z)$ and  the roof denotes
the  profinite  completion. Since  the  group  $M$  is assumed  to  be
abelian,  it  is a  torus  and  by a  theorem  of  Chevalley, has  the
congruence subgroup property. We can then show without too much
difficulty that 
\[ C=  \varprojlim \widehat{\G(m)/F(m)}\]  where $F(m)$ is  the normal
subgroup of  $G(\Z)$ generated by  $P^{\pm}(m\Z)$. Denote by  $C'$ the
image under the  induced map (still denoted by $\theta  $) of $C_H$ in
$C$.

\end{notation}

\begin{corollary}  \label{rankonedirichlet}   There  exists   a  fixed
infinite subgroup $\D \subset M(\Z)$  such that $\D$ acts trivially on
the image $C'$  of the congruence subgroup kernel of  $SL_2$ under the
map $\theta : SL_2 \ra G$.
\end{corollary}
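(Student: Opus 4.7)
The plan is to exhibit a fixed finite-index subgroup of $M(\Z)$ that fixes every coset representing an element of $C'$; since $C = \varprojlim \widehat{\G(m)/F(m)}$, this suffices for the action on $C'$ to be trivial by continuity.

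First I would unpack the representatives. An element of $C'$ is, at each level $m$, represented by a coset of the form $\theta(g_m)F(m) \in \G(m)/F(m)$, where $g_m \in SL_2(Dm\Z)$ and $D$ is chosen (as in Lemma \ref{Dlemma}) so that $\theta(SL_2(Dm\Z)) \subset G(m\Z)$. Let $a,b$ be the $(1,1)$- and $(1,2)$-entries of $g_m$. From $\det g_m = 1$ we have $\gcd(a,b)=1$; since $a \equiv 1 \pmod{Dm}$ we further get $\gcd(a, bDm)=1$. By Corollary \ref{rankoneisotropy}, the $M(\Z)$-isotropy of the coset $\theta(g_m)F(m)$ contains the subgroup generated by $M(D(D(a+bx)^N)^e \Z)$ for $x \in Dm\Z$. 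Substituting $x = Dm\cdot y$, this is precisely the subgroup $\D_{a, bDm}$ appearing in Corollary \ref{dirichlet1} (applied to the coprime pair $(a,bDm)$).

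Now I would apply Corollary \ref{dirichlet1}: the exponent of $M(\Z)/\D_{a,bDm}$ is bounded by a constant $R$ depending only on the fixed integers $D, N, e$ (and on $\dim M$), hence independent of $a$, $b$, and $m$. Consequently the subgroup $M(\Z)^R$ of $R$-th powers lies inside $\D_{a,bDm}$, and hence inside the isotropy of every coset $\theta(g_m)F(m)$. By continuity of the $M(\Z)$-action on $\widehat{\G(m)/F(m)}$, the subgroup $\D := M(\Z)^R$ fixes the representative of every element of $C'$ at every level, and so acts trivially on $C'$. Finally, $M$ is a torus (as it is connected abelian reductive), so $M(\Z)$ is a finitely generated abelian group; it is infinite since it is commensurable with the infinite group $L(\Z)$. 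Therefore $M(\Z)^R$ has finite index in $M(\Z)$ and is infinite, as required.

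The main obstacle in this approach is the uniformity of the exponent bound $R$ in $a$, $b$, and $m$; without such uniformity, one would only obtain a subgroup depending on the level and could not pass to the inverse limit. This uniformity is precisely the substantive content of Theorem \ref{dirichletlinear}, which was established via Dirichlet's theorem on primes in arithmetic progressions.
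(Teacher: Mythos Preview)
Your proof is correct and follows essentially the same approach as the paper's: invoke Corollary~\ref{rankoneisotropy} to see that the isotropy of each $\theta(g)F(m)$ contains the group generated by the congruence subgroups $M(D(D(a+bx)^N)^e\Z)$, then use the uniform exponent bound from Proposition~\ref{dirichlet}/Corollary~\ref{dirichlet1} to extract a fixed infinite subgroup $\D$ independent of $g$ and $m$, and pass to the inverse limit. The paper's proof is terser and does not name $\D$ explicitly, whereas you identify it as $M(\Z)^R$ and verify the coprimality hypothesis $\gcd(a,bDm)=1$ and the infiniteness of $\D$; these details are implicit in the paper but your spelling them out is entirely in line with its argument.
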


\begin{proof}  By Proposition  \ref{dirichlet}  (and its  consequence,
namely Corollary  \ref{dirichlet1}), there exists a  fixed integer $D$
and  a {\it  fixed} infinite  subgroup  $\D$ in  $M(D\Z)$, which  acts
trivially  on $\theta  (g) F(m)  \in  G(m)/F(m)$ for  {\bf all}  $g\in
SL_2(Dm\Z)$.    Since   $C'$  is   the   inverse   limit  of   $\theta
(SL_2(Dm)/E_2(Dm))$, it  follows that $\D$  acts trivially on  $C'$ as
well.

\end{proof}

\begin{proposition} \label{C'trivial} Let $\theta :  SL_2 \ra G$ be as
in the paragraph preceding  Corollary \ref{SL2commutator} Denote again
by  $\theta$ the  map induced  at  the level  of congruence  subgroups
kernels: $\theta : C_H \ra C$ with image $C'$ say. \\

The action of $SL_2(\Q)$ on $C'$ is trivial.
\end{proposition}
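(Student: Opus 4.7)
My plan is to combine Corollary \ref{rankonedirichlet} with Corollary \ref{SL2commutator}: the former supplies an infinite subgroup $\D \subset M(\Z)$ whose conjugation action on $C'$ is trivial, while the latter expresses every element of $\theta(SL_2)$ as a product of commutators with elements of $\D$. The bridge between the two is the observation that the pointwise centraliser of $C'$ in $G(\Q)$ is closed under commutation with any element that merely normalises $C'$ setwise.

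Concretely, I would first note that $C_H$ is the kernel of $\widehat H \to \overline H$, hence is normal in $\widehat H$; so $SL_2(\Q)$ acts on $C_H$ by conjugation, and applying $\theta$, the group $\theta(SL_2(\Q))$ normalises $C' = \theta(C_H) \subset \widehat G$ setwise. Next, set $Z = \{g \in G(\Q) : g c g^{-1} = c \text{ for all } c \in C'\}$; by Corollary \ref{rankonedirichlet}, $\D \subset Z$. The central computation, which I would carry out by direct expansion, is that for any $z \in Z$, $s \in SL_2(\Q)$, and $c \in C'$, one has
\[
[z,\theta(s)]\, c\, [z,\theta(s)]^{-1} = z\,\theta(s)\, z^{-1}\bigl(\theta(s)^{-1} c\,\theta(s)\bigr)\, z\,\theta(s)^{-1} z^{-1} = c,
\]
where one uses that $\theta(s)^{-1} c\, \theta(s)$ again lies in $C'$ (by the setwise normalisation) and that $z$ centralises every element of $C'$. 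Hence $[Z,\theta(SL_2(\Q))] \subset Z$.

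The proof would then be finished by invoking Corollary \ref{SL2commutator}: every $\theta(s)$ with $s \in SL_2(\Q)$ lies in the subgroup generated by commutators $[\delta_i,\theta(s_i)]$ with $\delta_i \in \D$ and $s_i \in SL_2(\Q)$, hence in $Z$ by the previous paragraph, so $\theta(SL_2(\Q))$ acts trivially on $C'$, which is precisely the assertion of the proposition. The main obstacle I anticipate is verifying that Corollary \ref{SL2commutator}, whose proof proceeds via Lie-algebraic considerations on root subspaces, really furnishes the required decomposition at the level of $\Q$-rational points. This reduces to the one-dimensional case $\theta(U_H^{\pm})(\Q) \cong \Q$: any $\delta \in \D$ not in the centre of $G$ acts on this line by multiplication by some rational scalar $\mu \neq 1$ (rational because the line is $\Q$-defined and one-dimensional), so the commutator map $u \mapsto [\delta,u] = (\mu-1)\cdot u$ is already surjective on $\Q$-points, and hence each $\theta(u) \in \theta(U_H^{\pm}(\Q))$ is a single commutator of the required form. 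Granting this, the chain above gives the proposition.
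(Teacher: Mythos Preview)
Your argument is exactly the paper's: invoke Corollary \ref{rankonedirichlet} to get an infinite $\D\subset M(\Z)$ centralising $C'$, observe (via your displayed computation, which the paper leaves implicit) that $[\D,\theta(SL_2(\Q))]$ then also centralises $C'$, and finish by Corollary \ref{SL2commutator}.

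One caveat on your final paragraph. The assertion that a non-central $\delta\in\D$ acts on the line $\theta(U_H^{\pm})(\Q)$ by a rational scalar $\mu$ presupposes that $\delta$ \emph{normalises} this one-dimensional subgroup, and that is not automatic: $M$ normalises $U^{\pm}$, but there is no reason it should preserve the particular line $\Q\cdot X\subset \fg_\a$ (the anisotropic torus $M$ need not have any one-dimensional $\Q$-sub-representation in $\fg_\a$). So ``rational because the line is $\Q$-defined and one-dimensional'' is circular---it assumes the line is $\delta$-stable. The paper does not spell out the passage to $\Q$-points either, so your worry is well placed; the correct fix is not a single eigenvalue but the full argument of the corollary preceding Corollary \ref{SL2commutator}: since $Ad(\delta)-1$ is invertible on $\fg_\a$, Cayley--Hamilton gives $X$ as a $\Q$-linear combination of the vectors $(Ad(\delta^k)-1)X$, and one then lifts this to the unipotent group. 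Apart from this point your proposal is correct and coincides with the paper's proof.
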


\begin{proof} We  know that $\Delta  \subset M(\Z)$ acts  trivially on
$C'$. But  the group $SL_2(\Q)$ {\it  acts} on the kernel  $C'$; hence
the  commutator   $[\Delta,SL_2(\Q)]$  acts  trivially  on   $C'$.  By
Corollary  \ref{SL2commutator},  $SL_2(\Q)\subset [\Delta,  SL_2(\Q)]$
and hence acts trivially on $C'$.
\end{proof}

\subsection{Proof of Theorem \ref{maintheorem} when $\Q-rank (G)=1$ and $L(\Z)$ is virtually abelian}

\begin{theorem}   \label{rankonemaintheorem}   Let  $\Q-rank   (G)=1$,
$\R-rank   (G)\geq  2$   and  assume   that  $M_0(\Z)$   is  virtually
abelian.  Then  the  congruence  subgroup kernel  $C$  is  central  in
$\widehat{G}$.
\end{theorem}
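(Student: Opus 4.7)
The plan is to apply the criterion of Proposition \ref{commutinggroups}: it suffices to show that for every pair of distinct rational primes $p\neq q$, the closed subgroups $G_p$ and $G_q$ of $\widehat G$ commute. Since $\Q$-rank$(G)=1$, the $S$-root decomposition is $\fg=\fg_{-2\a}\oplus\fg_{-\a}\oplus\fl\oplus\fg_\a\oplus\fg_{2\a}$, and the identity $\fg_{\pm 2\a}=[\fg_{\pm\a},\fg_{\pm\a}]$ from Lemma \ref{liealgebragenerator}[2], together with the density of $\Q$ in $\Q_p$, implies that $G_p$ is topologically generated by the one-parameter subgroups $V_X(\Q_p):=\{\exp(tX):t\in\Q_p\}$ as $X$ ranges over $\fg_{\pm\a}(\Q)$. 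The problem thus reduces to showing $[V_X(\Q_p),V_Y(\Q_q)]=1$ in $\widehat G$ for all nonzero $X\in\fg_\a(\Q)$ and $Y\in\fg_{-\a}(\Q)$.

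Fix such an $X$. By Jacobson--Morozov over $\Q$, extend $X$ to an $\mathfrak{sl}_2$-triple $(X,H_0,Y_X)$ with $H_0\in\mathrm{Lie}(S)$ and $Y_X\in\fg_{-\a}(\Q)$; this produces a $\Q$-morphism $\theta=\theta_X\colon SL_2\to G$ with $\theta(U_H^+)=V_X$, $\theta(U_H^-)=V_{Y_X}$, and $\theta(D)=S$. Proposition \ref{C'trivial} asserts that the conjugation action of $\theta(SL_2(\Q))$ on $C'_\theta:=\theta(C_H)\subset C$ is trivial. Because the conjugation action of $\widehat G$ on the compact group $C$ is continuous and $SL_2(\Q)$ is dense in $\widehat{SL_2}$, the whole subgroup $\theta(\widehat{SL_2})\subset\widehat G$ centralizes $C'_\theta$; in particular $C'_\theta$ is an abelian, central subgroup of $\theta(\widehat{SL_2})$.

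Set $u=\exp(zX)\in V_X(\Q_p)$, $v=\exp(wY_X)\in V_{Y_X}(\Q_q)$, and $c=[u,v]$. Lifting $u,v$ to elements $\widetilde u,\widetilde v\in\widehat{SL_2}$ at the prime-$p$ and prime-$q$ components respectively, their commutator in $\widehat{SL_2}$ lies in $C_H$ (since distinct-prime components of the restricted product $SL_2(\A_f)$ commute); hence $c=\theta([\widetilde u,\widetilde v])\in\theta(C_H)=C'_\theta$. Now choose $s_k\in S(\Q_p)$ with $\a(s_k)\to 0$ in $\Q_p$, possible because $S\simeq{\mathbb G}_m$ and $\a$ is a nontrivial character. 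Then $s_kus_k^{-1}=\exp(\a(s_k)zX)\to 1$ in $\widehat G$. Because $s_k\in\theta(\widehat{SL_2})$ centralizes $c$, we have $c=s_kcs_k^{-1}=[s_kus_k^{-1},\,s_kvs_k^{-1}]$. Writing $s_kvs_k^{-1}=z_k v$ with $z_k\in C'_\theta$ (valid since $s_k$ and $v$ lie at different primes, so commute modulo $C'_\theta$) and using that $z_k$ is central in $\theta(\widehat{SL_2})$ (hence commutes with $s_kus_k^{-1}$ and with $[s_kus_k^{-1},v]\in C'_\theta$), one obtains $c=[s_kus_k^{-1},v]\to[1,v]=1$ as $k\to\infty$. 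Thus $c=1$, proving $[V_X(\Q_p),V_{Y_X}(\Q_q)]=1$.

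The main technical obstacle I anticipate is promoting this commutation from the specific partner $Y_X$ supplied by the $\mathfrak{sl}_2$-triple to an arbitrary $Y\in\fg_{-\a}(\Q)$. My plan is to conjugate the established commutation by elements of $L(\Q)$ (which moves $(X,Y_X)$ to $(\mathrm{Ad}(\ell)X,\mathrm{Ad}(\ell)Y_X)$) and to combine this with the symmetric Jacobson--Morozov procedure starting from $Y$ (yielding a partner $X_Y\in\fg_\a(\Q)$); using irreducibility of $\fg_{\pm\a}$ as an $L$-module and the Zariski density of $L(\Q)$-orbits on these root spaces, one produces enough commuting pairs to generate all the required commutation relations between one-parameter subgroups in $U^+(\Q_p)$ and $U^-(\Q_q)$. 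Once this is completed, together with the symmetric statement interchanging the roles of $p$ and $q$, Proposition \ref{commutinggroups} yields the centrality of $C$ in $\widehat G$ and finishes the proof of Theorem \ref{rankonemaintheorem}.
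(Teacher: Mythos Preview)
Your argument up to and including the contraction step is sound: once you know (via Proposition~\ref{C'trivial}) that $C'_\theta$ is central in $\theta(\widehat{SL_2})$, Proposition~\ref{commutinggroups} applied to the embedded $SL_2$ already gives $[U_H^+(\Q_p),U_H^-(\Q_q)]=1$, so you need not rederive it by hand. (A small side point: Jacobson--Morozov does not hand you $H_0\in\mathrm{Lie}(S)$ for free; one must conjugate by an element of $P(\Q)$ to arrange $\theta(D)=S$, and then check that the resulting nilpotent is still in $\fg_\a$. This is routine but should be said.)

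The genuine gap is in your promotion step. Conjugating the established relation $[V_X(\Q_p),V_{Y_X}(\Q_q)]=1$ by $\ell\in L(\Q)$ yields only $[V_{\mathrm{Ad}(\ell)X}(\Q_p),V_{\mathrm{Ad}(\ell)Y_X}(\Q_q)]=1$: both coordinates move \emph{together}. Neither irreducibility of $\fg_{\pm\a}$ under $L$ nor varying the Jacobson--Morozov starting point gives you a mechanism to decouple the $p$-side from the $q$-side, and the set of pairs $(X',Y')$ with $[V_{X'}(\Q_p),V_{Y'}(\Q_q)]=1$ carries no obvious linear or Zariski-closed structure that would let you pass from a diagonal orbit to a full product. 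As written, the plan does not produce $[V_X(\Q_p),V_Y(\Q_q)]=1$ for independent $X,Y$.

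The paper closes this gap by a different idea. One fixes a \emph{single} generic $X\in\fg_\a(\Q)$ (nonzero projection to every absolute root space in $\fg_\a$), so that the $M$-translates of $X$ span $\fg_\a$. The key observation is that conjugation of $U^+(\Q_p)$ by $m\in M(\Q)$ depends only on the image of $m$ in $M(\Q_p)$, and likewise for $U^-(\Q_q)$ via $M(\Q_q)$; so the diagonal action of $M(\Q)$ factors through $M(\Q_p)\times M(\Q_q)$. By Sansuc's theorem the closure of $M(\Q)$ there is open, hence contains a product $K_p\times K_q$ of compact open subgroups. This \emph{decouples} the two sides: from $[U_H^+(\Q_p),U_H^-(\Q_q)]=1$ one gets $[\,^{K_p}(U_H^+(\Q_p)),\,^{K_q}(U_H^-(\Q_q))]=1$, with the two conjugations now independent. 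The genericity of $X$ then ensures $^{K_p}(U_H^+(\Q_p))=U^+(\Q_p)$ and $^{K_q}(U_H^-(\Q_q))=U^-(\Q_q)$ (this is the paper's Lemma~\ref{U(p)andUH(p)}), and Proposition~\ref{commutinggroups} finishes. You should replace your $L(\Q)$-orbit argument with this weak-approximation decoupling.
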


\begin{proof} Fix $X\in  \fg _\a(\Q)$ such that  over the algebraic
closure  $\overline{\Q}$, the  projection of  $X$ to  each root  space
(occurring in $\fg _\a$) of a maximal torus in $G$  containing the product
torus $MS$ is nonzero. Since $L$ (being the centraliser of the split
torus $S$) contains  this maximal torus, it follows  that the space
generated by  the conjugates  $^t(X); t  \in L(\Q)$  is all  of $\fg
_\a$.  The Lie algebra  generated by $\fg _\a$ is all  of $Lie U^+$ by
part 2  of Lemma  \ref {liealgebragenerator}.  Denote  by $exp:\fg
_\a \ra U^+ \subset G$ the  exponential map on the elements (which are
of course, nilpotent matrices) of $\fg_\a$.  \\

By the  Jacobson-Morozov theorem, there exists  a homomorphism $\theta
':   H=   SL_2   \ra   G$   defined   over   $\Q$,   such   that   $u=
\theta \begin{pmatrix} 1 & 1 \\  0 & 1\end{pmatrix} \mapsto exp(X) \in
U^+$.   Denote by  $B$  (resp.   $T$) the  group  of upper  triangular
(resp. diagonal) matrices in $H$. Since $P=P_0$ is a minimal parabolic
subgroup, it follows that there exists $g_1\in G(\Q)$ which conjugates
$\theta (B)$ into $P$: $^{g_1}\theta  '(B)\subset P$. The conjugacy of
split tori in $P$ shows that  there exists $g_2\in P$ which conjugates
$\theta  '(T)$  into $S$.  The  product  $g=g_2g_1 $  then  conjugates
$\theta '(B)$ into $P$ and $\theta '(T)$ into $S$. Denote by $\theta $
the map  $h\mapsto g  \theta '(h)g^{-1}$.  Since $exp  (X)$ lies  in a
unique maximal  unipotent $\Q$-subgroup  namely $U^+$,  the inclusions
$^g(exp  (X)) \in  U^+, exp  (X)\in U^+$  imply that  $g\in P$.  Write
$g=mu$ with $m\in M_0(\Q), u \in U^+$.  \\

Now $^g(exp(X))=exp (^g (X))=exp  ^m(u(X))$. Since $u$  is unipotent,
the  element $^u(X)$  is of  the form  $X+Y$ with  $Y\in \fg  _{2\a}$.
Since $S=\theta  (T)$ acts  by the eigencharacter  $\a$ on  $^u(X)$ it
follows that $Y=0$ and $^u(X)=X \in \fg _\a$. Hence $^u(X)=X$ also has
nonzero projection to all the root  spaces of $T$, and hence the set
$^M(^u(X))$ generates $Lie (U^+)$, and the set $^M(exp (X))$ generates
$U^+(\Q)$.  Moreover,  $^\k (X)  \in \fg  _{-\a}$; hence  $^M(^\k (X))$
generates $\fg_{-\a}$ and $^M(U_H^-)$ generates $U^-$. \\

\begin{lemma} \label{U(p)andUH(p)} If $K_p$ is a compact open subgroup
in $M(\Q _p)$ then
\[^{K_p}(U^+_H(p))=^{M_p}(U_H^+)=U^+(p).\]
\end{lemma}

\begin{proof}It is enough to prove that  the $\Q _p$ Lie algebra $\fv$
generated by  $^{K_p} (U_H^+)(p))$ is  all of $\fu  = Lie  (U^+(p))$. Let
$X\in Lie (U_H^+)$  as before. If $\fv \neq \fu$  then there exists an
$m_0\in M_0(\Q)$  such that $Y=^{m_0}(X)\notin \fv$.  Suppose $\l :\fu
\ra \Q _p$ is a linear form which is nonzero on $Y$ but zero on $\fv$;
the function $\l _X: m\mapsto \l  (^m(X))$ is a polynomial function on
$M$ which  is identically zero on  the open subgroup $K_p$;  but $K_p$
being open, is Zariski dense and hence $\l_X$ is zero on all of $M_p$,
a contradiction. Therefore, $\fv =\fu$.
\end{proof}

We can now complete the proof of centrality for $G$. \\

Since  $C'$  is  central  in  $\theta  (\widehat{SL_2})$  (Proposition
\ref{C'trivial}),  it follows  from Proposition  \ref{commutinggroups}
that $U_H^+(p)$ and $U_H^-(q)$  commute. Conjugation by elements $m\in
M(\Q)$ shows that $^m(U_H(p))$ and $^m(U_H^-(q))$ commute for all $m\in
M(\Q)$.   However, the  action  of $M(\Q)$  on  $U^+(p)$ and  $U^-(q)$
factors via the map
\[  M(\Q)\ra  M(\Q  _p)\times  M(\Q _q)  \ra  Aut  (U^+(p))\times  Aut
(U^-(q)).\]  (the   maps  given   by  $m\mapsto  (m_p,m_q)$   and  the
conjugation     by     $m$     becomes     $^m(U^+(p))=^{m_p}(U^+(p)),
^m(U^-(q))=^{m_q}(U^-(q))$.  By  a theorem of Sansuc  (see Theorem 7.9
of \cite{Pl-R}), the closure of $M(\Q)$ in $M(\Q_p)\times M(\Q _p)$ is
{\it open} of  finite index.  Hence the closure of  $M(\Q)$ contains a
subgroup of  the form $K_p\times  K_q$ with $K_p\subset M(\Q  _p), K_q
\subset M(\Q _q)$ open.   Consequently, $^{K_p}(U_H(p))$ commutes with
$^{K_q}(U_H^-(q))$.   Then   Lemma  \ref{U(p)andUH(p)}   implies  that
$U^+(p)$ and $U^-(q)$ commute .  By Proposition \ref{commutinggroups},
this implies that $C$ is central.
\end{proof}

\newpage

\section{When $\Q-rank (G)\geq 2$ or $M(\Z)$
is not virtually abelian}

\subsection{Centrality of $C/C_M'$}

Assume that $\infty - rank(G)\geq  2$.  {\it Suppose that $M_s(\Z)$ is
infinite}.  Now  consider the quotient ${\widehat  G}/C_M'$; since (by
Lemma \ref{C'central})  $C'_M$ is  a central  and compact  subgroup of
$\widehat G$, it  follows that this quotient is a  locally compact and
Hausdorff topological group. \\

As  we   already  observed   in  the  discussion   preceding  equation
(\ref{parabolicinverselimit})   the    restriction   to   $P(\Z)\simeq
U(\Z)M(\Z)$ of the topology induced on $G(\Q)$ viewed as a subgroup of
$\widehat{G}/C_M'$  is  simply  the congruence  topology.   Thus,  for
$m\geq  1$   the  groups  $P(m\Z)$   give  a  fundamental   system  of
neighbourhoods of identity of $P(\Q)\subset {\widehat G}/C_M'$. \\

Let  $F(m)$  denote  the  subgroup  of  $G(m\Z)$  normalised  by
$G(\Z)$  and  generated  as  a  normal subgroup  by  the  two  groups
$P(m\Z)$  and $P^{-}(m\Z)$.  Then  the quotient  ${\widehat G}/C_M'$
maps   onto   $G(\A_f)$    with   kernel   $C/C_M'\simeq   \varprojlim
\widehat{G(m\Z)/F(m)}$ (see equation (\ref{parabolicinverselimit}). \\

An  element  in  the   quotient  group  $G(m\Z)/F(m)$  (after  perhaps
multiplying  it on  the  right  by an  element  of  the Zariski  dense
subgroup $F(m)$)  may be written  in the  form $u^{-}p $  with $u^-\in
U^-, \quad p \in P$.  Moreover, the denominators $a$ of the entries of
the  matrices $u^{\pm}$  and $p$  are co-prime  to $m$.   From equation
(\ref{conjugationbyM}),  if  $t\in  M'(Da^N\Z)$, then  $^t  (g)=g  \in
gF(m)$ for  every $t \in M(a^N\Z)$.  On the other hand,  $M(m\Z)$ also
acts  trivially   on  $G(m)/F(m)$  since  $M(m\Z)$   is  contained  in
$F(m)$. \\

If  $(D,m)$ denotes  the  g.c.d of  $D$ and  $m$,  then $M(m\Z)$  and
$M(Da^N\Z)$ together generate  the group $M((D,m)\Z)$ ,  since $m$ and
$a$ are  co-prime (it  follows from strong  approximation that  for two
non-zero  integers $u,v$  with  g.c.d.  $w$,  the  group generated  by
$M'(u\Z)$  and $M'(v\Z)$  is  all of  $M'(w\Z)$);  therefore the  {\it
infinite   group}  $M'(D\Z)\subset   M'(D,m)\Z)$  acts   trivially  on
$G(m\Z)/F(m)$ and hence on $C/C_M$. \\

Since all of $G(\Q)$ operates on the quotient $C/C'_M$ (recall that we
have  proved that  $C'_M$  is  centralised by  $G(\Q)$  and hence,  in
particular, is stable  under the action of $G(\Q)$),  and the infinite
group  $M'(D\Z)$  acts trivially,  it  follows  by the  simplicity  of
$G(\Q)$ modulo its centre, that $G(\Q)$ acts trivially; hence $C/C'_M$
is centralised by ${\widehat G}/C'_M$.  \\

\subsection{Centrality of $C$}

Now let $g\in G(\Q)$ and $c\in  C$. From the preceding subsection, $g$
acts   trivially    on   $C/C'_M$    and   $C'_M$   is    central   in
$\widehat{G}$.  Then $\psi  (g)=gcg^{-1}c^{-1}$  is in  $C_M'$ and  is
central in  $\widehat G$.  Hence  it follows that  $\psi (g_1g_2)=\psi
(g_1)\psi (g_2)$.  Thus $\psi :G(\Q)\ra  C_M'$ is a  homomorphism into
the abelian group  $C_M'$.  Since $G(\Q)$ is simple  modulo centre, it
follows  that $\psi  $ is  trivial and  hence that  $C$ is  central in
$\widehat G$. \\

This  and  Theorem  \ref{rankonemaintheorem}  together  prove  Theorem
\ref{maintheorem} in all cases.

\newpage


\begin{thebibliography}{JPSH}

\bibitem[BMS]{BMS} H. Bass, J. Milnor and J.-P. Serre, Solution of the
congruence  subgroup problem  for $SL_n(n\geq  3)$ and  $Sp_{2n}(n\geq
2)$, Publ. Math. I.H.E.S., {\bf 33} (1967) 59-137. \\

\bibitem[BT]{BT} A.Borel and J.Tits, Groupes reductifs, Publ. Math. I.H.E.S, 
{\bf 27} (1965), 55-150. \\ 


\bibitem[Gille]{Gille} P. Gille, Le Probleme de Kneser-Tits, Seminaire
Bourbaki, Vol.  2007-2008, Asterisque No.  {\bf 326} (2009),  Exp 983,
vii, 39-81. \\

\bibitem[Pl-R]{Pl-R} V. Platonov and A.Rapinchuk. Algebraic Groups and
Number  Theory,  pure  and  applied math  {\bf  139},  Academic  Press
Inc. Boston, MA (994). \\

\bibitem[P]{P}  G.  Prasad, On  some  work  of Raghunathan,  Algebraic
Groups and Arithmetic, 25-40, Tata Inst. Fund. Res, Mumbai (2040). \\


\bibitem[Pr-R1] {Pr-R1} G.Prasad and M.S.Raghunathan On the Congruence
Subgroup Problem:  determination of the ``metplectic  kernel''. Invent
math. 71 (1983), no.1 21-42. \\

\bibitem[Pr-R2]{Pr-R2}   G.Prasad   and  M.S.Raghunathan   Topological
central extensions of semi-simple groups over local fields I and II Ann
of  math  (2)  {\bf  119}   (1984)  {\bf  no.1}  143-201,{\bf  no.  2}
203-268. \\

\bibitem[Pr-Ra]{Pr-Ra}  G.Prasad and  A.Rapinchuk  Computation of  the
metaplectic kernel, I.H.E.S Publ. Math. {\bf no 84} (1997), 91-187. \\

\bibitem[Pr-Ra2]{Pr-Ra2}  G.Prasad and  A.Rapinchuk On  the congruence
kernel for simple algebraic groups,  Tr. Math Inst Steklova, {\bf 292}
(2016) 224-254. \\

\bibitem[R1]{R1}  M.   S.   Raghunathan, On  the  congruence  subgroup
problem, Publ. Math. Inst. Hautes  Etud. Sci. {\bf 46} (1976) 107-161.
\\

\bibitem[R2]{R2}  M.  S.   Raghunathan, On  the  congruence subgroup
problem II, Invent. Math. {\bf 85} (1986) 73-117. \\

\bibitem[R3]{R3} M. S. Raghunathan, The congruence subgroup problem,
in  Proceedings  of  the  Hyderabad conference  on  algebraic  groups,
465-494,  ed.  S.   Ramanan,  National Board  for Higher  Mathematics,
Mumbai, 1991. \\

\bibitem[Ser]{Ser} J-P.Serre, Le probleme de groupes de congruence
pour $SL_2$, Ann of Math. {\bf 92} (970), 489-527. \\

\bibitem[Ser2]{Ser2}  J-P.Serre,  Complex   Semi-simple  Lie  algebras,
Springer, (1987), ISBN 3-540-67827-1. \\


\bibitem[Sury]{Sury}   B.  Sury,   Congruence  Subgroup   Problem  for
anisotropic    groups     over    semi-local    rings,     Proc.    Ind
Acad.Sci. Math. {\bf 101} (1991), no. {\bf 2}, 87-110. \\



\bibitem[Tits]{Tits}  Algebraic  and  abstract simple  groups, Ann  of
Math. {\bf 80} (1964) 313-329. \\




\end{thebibliography}
\end{document}